\theoremstyle{plain}
\newtheorem*{theorem*}{Theorem}
\newtheorem{theorem}{Theorem}
\newtheorem{lemma}[theorem]{Lemma}
\newtheorem{proposition}[theorem]{Proposition}
\newtheorem{corollary}[theorem]{Corollary}
\newtheorem{rmk}[theorem]{Remark}
\newtheorem{question}[theorem]{Question}
\theoremstyle{remark}
\mathchardef\emptyset="001F
\numberwithin{equation}{section}
\newcommand{\R}{{\mathbb R}}
\newcommand{\be}{\begin{equation}}
\newcommand{\ee}{\end{equation}}
\begin{document}
\title[Classification of uniformly distributed measures of dimension $1$]{Classification of uniformly distributed measures of dimension $1$ in general codimension}

\author{Paul Laurain}
\address{IMJ-Paris 7, Institut de Math\'ematiques de Jussieu, Equipe G\'eom\'etrie et Dynamique, B\^atiment Sophie Germain, Case 7012, 75205, Paris Cedex 13, France}
\email{paul.laurain@imj-prg.fr}

\author{Mircea Petrache}
\address{PUC Chile, Facultad de Matem\'aticas, Av. Vicuna Mackenna 4860, 6904441, Santiago, 
Chile}
\email{mpetrache@mat.uc.cl}
\date{\today}
 \maketitle

\begin{abstract}
Starting with the work of Preiss on the geometry of measures, the classification of uniform measures in $\mathbb R^d$ has remained open, except for $d=1$ and for compactly supported measures in $d=2$, and for codimension $1$. In this paper we study $1$-dimensional measures in $\mathbb R^d$ for all $d$ and classify uniform measures with connected $1$-dimensional support, which turn out to be homogeneous measures. We provide as well a partial classification of general uniform measures of dimension $1$ in the absence of the connected support hypothesis.
\end{abstract}

\section{Introduction}
Let $(X,d)$ be a metric space, and $\mu$ a Radon measure over $X$.
\begin{itemize}
\item We say that $\mu$ is a \emph{homogeneous measure} if there exists a subgroup $G$ of isometries of $(X,d)$ such that $\mathrm{spt}(\mu)$ is $G$-invariant, $G$ acts transitively on $\mathrm{spt}(\mu)$ and $I_\#\mu=\mu$ for all $I\in G$.
\item We say that $\mu$ is \emph{uniformly distributed} over $X$ if there exists a function $f:[0,+\infty)\to [0,+\infty)$ such that 
\begin{equation}\label{defn}\mbox{For every $x\in \mathrm{spt}(\mu)$ and all $r>0$ there holds $\mu(B(x,r))=f(r)$.}
\end{equation}
 \item We say that $\mu$ is \emph{uniformly distributed up to distance $r_0>0$} if there exists $f:[0,r_0]\to[0,+\infty)$ such that the above holds with the restriction $0<r\le r_0$. If such $r_0>0$ exists then we say that $\mu$ is \emph{locally uniformly distributed}.
\end{itemize} 
One can check that the above notions are expressed in increasing order of generality.

\medskip

Note that each of the above-mentioned classes of measures is invariant under the action of dilations and translations and products, but in general \emph{not under sums}. For example, for any $a,b\in X$ the measure $\delta_a+\delta_b$ is homogeneous, however e.g. for $(X,d)$ equal to $\mathbb R$ with the usual distance, the measure $\mu=\delta_0+\delta_1+\delta_2+\delta_3$ is not locally uniform up to distance $r_0>0$ if $r_0\ge 3$. 

\medskip

The present paper is a step in the investigation of the second notion in the case of $X=\mathbb R^n$, and to introduce the known results we focus on the following question:

\begin{question}\label{question1}
 What form could the function $f(r)$ in \eqref{defn} take?
\end{question}

\noindent\textbf{Assumption:} \emph{We concentrate in this paper on the case that $(X,d)$ is the Euclidean space $(\mathbb R^d, d_{\ell_2})$, and we will not mention this hypothesis below.}

\medskip

The question \ref{question1} appears as a natural development of \cite{preiss, marstrand, mattila}, to whom the following is due:

\begin{theorem*}[Marstrand, Mattila, Preiss]
 If $\mu$ is uniformly distributed then there exist $c\in\mathbb R\setminus \{0\}$ and an integer $k\le n$ such that $f(r)=cr^k + o(r^k)$.
\end{theorem*}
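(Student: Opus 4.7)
The strategy is to study the Laplace-type transform of $\mu$, which by uniform distribution collapses to a one-variable object. Fix $x_0\in\mathrm{spt}(\mu)$ and set
\[
\varphi(s):=\int_{\R^n}e^{-s|x_0-y|^2}\,d\mu(y)=2s\int_0^\infty r\,e^{-sr^2}f(r)\,dr,\qquad s>0,
\]
the second equality by the layer-cake formula. Because the right-hand side depends only on $f$, the function $\varphi$ is independent of the base point, so $F(x,s):=\int e^{-s|x-y|^2}\,d\mu(y)$ is a real-analytic function of $(x,s)\in\R^n\times(0,\infty)$ that equals $\varphi(s)$ identically on $\mathrm{spt}(\mu)$. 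A classical Abelian--Tauberian dictionary (Karamata's theorem, using monotonicity of $f$) converts the target asymptotic $f(r)=cr^k+o(r^k)$ into $\varphi(s)=Cs^{-k/2}(1+o(1))$ as $s\to\infty$, with constants related by an explicit Gamma factor.

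To produce the exponent $k$, I would run a blow-up at $x_0$: the rescaled measures $\mu_j:=(T_{x_0,r_j})_\#\mu/f(r_j)$, where $T_{x_0,r}(y)=(y-x_0)/r$ and $r_j\to 0^+$, are uniformly distributed with profiles $f_j(r)=f(r_jr)/f(r_j)$. The upper bound $k\le n$ follows from a Besicovitch-type comparison with $\Ln$ applied to $\mu$ on its support (a strictly super-Lebesgue density is incompatible with $\mu$ being a nonzero Radon measure uniformly distributed everywhere on $\mathrm{spt}(\mu)$), while $k\ge 0$ is immediate from $f$ being finite. These bounds give uniform Radon compactness of the $\mu_j$, and any subsequential limit $\nu$ is a uniformly distributed measure on $\R^n$ whose profile, by the scaling built into the construction, satisfies $g(\lambda r)=\lambda^k g(r)$, forcing $g(r)=cr^k$ exactly, for some real $k\in[0,n]$.

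The heart of the proof is showing $k\in\mathbb Z$. The extension $F_\nu(x,s):=\int e^{-s|x-y|^2}\,d\nu(y)$ satisfies the heat-type identity
\[
4s^2\,\partial_s F_\nu+\Delta_x F_\nu+2ns F_\nu=0,
\]
as one checks by differentiating the Gaussian kernel in $x$ and $s$, while the rigidity $F_\nu(x,s)\equiv Cs^{-k/2}$ on $\mathrm{spt}(\nu)$ imposes strong algebraic constraints once $F_\nu$ is expanded in Hermite-type polynomials around any point of $\mathrm{spt}(\nu)$. Matching coefficients in this expansion, along the lines of Marstrand--Mattila--Preiss, forces $k$ to be a nonnegative integer, giving the claimed asymptotic $f(r)=cr^k+o(r^k)$. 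I expect this final integrality step -- the rigidity analysis of the tangent measure $\nu$ via the PDE for $F_\nu$ -- to be the main obstacle; by contrast, the blow-up extraction and the Tauberian translation back to $f$ are comparatively standard.
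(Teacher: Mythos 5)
The paper does not prove this statement; it is quoted as a classical theorem of Marstrand, Mattila and Preiss, so your proposal can only be measured against the known proofs (e.g. the account in the book \cite{delellis}). Your outline correctly identifies the standard toolkit --- the Gaussian transform $F(x,s)=\int e^{-s|x-y|^2}\,d\mu(y)$, which is constant in $x$ on $\mathrm{spt}(\mu)$, blow-ups, and moment identities, and your PDE for $F_\nu$ is correct --- but two of the steps you treat as routine are precisely where the content of the theorem lies, and as written they do not go through.

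First, the blow-up step. From doubling (a Besicovitch covering argument gives $f(2r)\le Nf(r)$) you do get compactness of $\mu_j=(T_{x_0,r_j})_\#\mu/f(r_j)$ and that any subsequential limit $\nu$ is uniformly distributed. But the assertion that its profile satisfies $g(\lambda r)=\lambda^k g(r)$ ``by the scaling built into the construction'' is unjustified: it requires $f(\lambda r_j)/f(r_j)\to\lambda^k$, i.e. regular variation of $f$ at $0$ with a well-defined index, which is essentially the statement being proved. Doubling alone only yields $0<\liminf_{r\to 0} f(r)/r^k\le\limsup_{r\to0} f(r)/r^k<\infty$ for a suitable $k$, not existence of the limit; and without the existence of $\lim_{s\to\infty}s^{k/2}\varphi(s)$ your Tauberian step has no hypothesis to act on (you in fact state only the Abelian direction, from $f$ to $\varphi$, which is the direction you do not need). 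Establishing that $s^{k/2}\varphi(s)$ converges --- equivalently, that all tangent measures at $x_0$ are $k$-uniform with one and the same constant --- is a genuinely delicate part of the Marstrand--Mattila--Preiss argument and is simply absent here. Second, the integrality of $k$ is deferred to ``matching coefficients\dots along the lines of Marstrand--Mattila--Preiss''; the actual argument is not a formal coefficient match but a geometric one, using the second-moment identity to produce points where the tangent measure is flat, i.e. a multiple of $\mathcal H^k$ on a $k$-plane, and this is the heart of Marstrand's theorem. As it stands the proposal is a correct map of the territory with the two hardest passes marked ``to be crossed''.
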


In fact, Preiss proves the above result for uniformly distributed measures as the main step towards the more general, by now classical, result, which instead of local uniformity assumes ``uniformity up to the first germ of $f$'':

\begin{theorem*}[Preiss]
 If for an integer $k$ there holds $\mu(B(x,r))= cr^k + o(r^k)$ for $\mu$-almost every $x$ with $c$ independent of $x$, then $\mu$ is a constant multiple the Hausdorff measure $\mathcal H^k$ restricted to a countable union of smooth $k$-dimensional manifolds.
\end{theorem*}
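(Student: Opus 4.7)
My plan is to follow Preiss's tangent-measure strategy. The first step is to introduce, for $\mu$-a.e.\ point $x_0$, the family $\mathrm{Tan}(\mu,x_0)$ of weak-$*$ subsequential limits of the rescalings $r_j^{-k}(T_{x_0,r_j})_\#\mu$, where $T_{x_0,r}(y)=(y-x_0)/r$ and $r_j\to 0$. The density hypothesis $\mu(B(x,r))=cr^k+o(r^k)$ bounds masses on balls locally uniformly, so $\mathrm{Tan}(\mu,x_0)$ is nonempty, and the same hypothesis passes to the weak-$*$ limit to force every $\nu\in\mathrm{Tan}(\mu,x_0)$ to be \emph{exactly $k$-uniform}: $\nu(B(y,s))=cs^k$ for every $y\in\mathrm{spt}(\nu)$ and every $s>0$. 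This reduces the problem to the classification of $k$-uniform measures and the tangential information they carry.

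Next, I would invoke Preiss's rectifiability criterion: if at $\mu$-a.e.\ $x$ the set $\mathrm{Tan}(\mu,x)$ contains a \emph{flat} measure, i.e.\ a constant multiple of the Hausdorff measure $\mathcal H^k$ restricted to some $k$-plane $V$, then $\mu$ is $k$-rectifiable. Combined with the exact asymptotic density $cr^k$ and standard differentiation theory for Radon measures, this forces $\mu$ to coincide, up to a multiplicative constant, with $\mathcal H^k$ on a countable union of $C^1$ $k$-dimensional manifolds. Thus the whole theorem reduces to showing that every $k$-uniform measure $\nu$ has at least one flat tangent measure, either at a support point or ``at infinity'' via blow-downs $R_j^{-k}(T_{0,R_j})_\#\nu$ with $R_j\to\infty$.

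The hard core of the argument is this last reduction, carried out via moment analysis. One considers the Laplace-type transform
\[ F_\nu(s,x) := \int_{\mathbb R^d} e^{-s|y-x|^2}\,d\nu(y), \qquad s>0,\ x\in\mathrm{spt}(\nu), \]
and a layer-cake computation shows that the $k$-uniformity $\nu(B(x,r))=cr^k$ makes $F_\nu(s,x)$ independent of $x\in\mathrm{spt}(\nu)$. Differentiating the identity $F_\nu(s,x_1)=F_\nu(s,x_2)$ in $s$ and in $x$ produces a hierarchy of polynomial identities for the moments of $\nu$ about each support point, constraining $\mathrm{spt}(\nu)$ to lie in the zero set of a real-analytic function with controlled growth; a barycentric/asymptotic argument on the leading polynomial then yields a flat blow-down. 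The main obstacle is precisely this step: non-flat $k$-uniform measures really do exist---the $3$-uniform measure on the light cone $\{x_1^2+x_2^2+x_3^2=x_4^2\}$ in $\mathbb R^4$ is the prototypical example---so one cannot hope to flatten $\nu$ itself, only to extract flatness from some blow-down, and quantifying this requires the delicate uniform polynomial control that occupies most of Preiss's original paper.
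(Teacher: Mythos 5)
The paper does not prove this statement: it is quoted as background, attributed to \cite{preiss}, with the reader referred to \cite{delellis} for a self-contained account, so there is no in-paper argument to compare yours against. Judged on its own, your outline is a faithful description of the architecture of Preiss's actual proof: tangent measures at a.e.\ point are exactly $k$-uniform; every $k$-uniform measure admits a flat blow-down, established through the $x$-independence of $\int e^{-s|y-x|^2}\,d\nu(y)$ on $\mathrm{spt}(\nu)$ and the resulting moment identities; and the presence of a flat tangent measure at a.e.\ point, together with the density bounds, yields rectifiability, after which the exact density $cr^k$ identifies $\mu$ as a constant multiple of $\mathcal H^k$ on the rectifiable set. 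You also correctly flag the cone $\{x_1^2+x_2^2+x_3^2=x_4^2\}$ as the obstruction to any naive flattening of $\nu$ itself.

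The caveat is that what you have written is a roadmap, not a proof: the two steps you invoke as black boxes are precisely where essentially all of Preiss's work lies. In particular, the ``rectifiability criterion'' in your second paragraph is \emph{not} the Marstrand--Mattila criterion (which requires \emph{all} tangent measures at a.e.\ point to be flat, not merely one), and the statement ``one flat tangent measure at a.e.\ point implies rectifiability'' is false without the density hypothesis. Making it work requires Preiss's connectivity argument for the cone of tangent measures together with a quantitative separation between flat and non-flat $k$-uniform measures, and the existence of flat blow-downs requires the full uniform polynomial control you allude to at the end. So the proposal is correct as a summary of the known proof, but it does not constitute an independent argument; if the intent was to supply a proof rather than a citation trail, the gap is that both pillars are asserted rather than established.
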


A self-contained explanation of the above theorems is given in the book \cite{delellis}.

Following the breakthrough in \cite{preiss}, Question \ref{question1} in Euclidean spaces has been directly addressed in the related work \cite{kop}, in which the case $f(r)=cr^{n-1}$ is studied and the first example of a \emph{non-homogeneous uniform measure} is given, and \cite{kip}, which represents what can be said about Question \ref{question1} in the case of general $f$ by general methods.

\begin{theorem}[Kirchheim-Preiss]\label{kpthm}
 If $\mu$ is a uniformly distributed measure over $\mathbb R^d$ then 
 \begin{equation}\label{fanalitic}
  f(r) \mbox{ is analytic in an interval $[0,r_0)$,}
 \end{equation}
and in fact there exists an integer $0\le k\le d$ and an analytic variety $V\subset \mathbb R^d$ such that $\mu=C\mathcal H_k\llcorner V$. If $V$ is compat then it is an algebraic variety.
\end{theorem}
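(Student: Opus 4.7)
The plan is to realize $\mathrm{spt}(\mu)$ as the common zero set of a family of real-analytic functions in $x$ coming from Gaussian-weighted moments of $\mu$. For each $s>0$ define
\[
F(x,s) \;=\; \int_{\R^d} e^{-s|y-x|^2}\,d\mu(y).
\]
By the Marstrand--Mattila--Preiss theorem combined with elementary packing estimates, $\mu$ has at most polynomial volume growth, so $F$ converges absolutely; differentiation under the integral then shows that $F(\cdot,s)$ is real-analytic on $\R^d$ for each fixed $s$. On the other hand, the layer-cake formula and \eqref{defn} give, for $x\in\mathrm{spt}(\mu)$,
\[
F(x,s) \;=\; 2s\int_0^\infty r\,e^{-sr^2}f(r)\,dr \;=:\; \Phi(s),
\]
which is \emph{independent of $x$}. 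Thus $x\mapsto F(x,s)-\Phi(s)$ is a real-analytic function on $\R^d$ vanishing on $\mathrm{spt}(\mu)$, and $\mathrm{spt}(\mu)$ is contained in the real-analytic set $V:=\bigcap_{s>0}\{F(\cdot,s)=\Phi(s)\}$.

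Next, I would identify the dimension and density. Marstrand--Mattila--Preiss yields an integer $k\leq d$ and $c\neq 0$ with $f(r)=cr^k+o(r^k)$, and then Preiss's second theorem (quoted in the excerpt) represents $\mu$ as a constant multiple of $\mathcal H^k$ restricted to a countable union of $C^1$ $k$-manifolds $M_i\subset V$. By standard local structure theory of real-analytic sets (coherence, stratification, matching of dimensions), the $M_i$ must coincide locally with the $k$-dimensional stratum of $V$, upgrading their regularity to real-analytic and yielding $\mu=C\mathcal H^k\llcorner V$. For the analyticity of $f$ asserted in \eqref{fanalitic}, I would expand $F(x_0+h,s)$ as a convergent power series in $h$ at a base point $x_0\in\mathrm{spt}(\mu)$: matching the coefficients of this series with the Laplace-type integral representation of $\Phi(s)$ determines the Taylor coefficients of $f$ around $0$, and a Laplace-inversion estimate then yields a convergent series expansion of $f$ on some interval $[0,r_0)$.

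For the refinement when $V$ is compact, replace the Gaussian weights by polynomial weights. The moments $P_m(x)=\int|y-x|^{2m}d\mu(y)$ are now everywhere finite, are polynomials in $x$ of degree $\leq 2m$ (expand $|y-x|^{2m}=(|y|^2-2\langle x,y\rangle+|x|^2)^m$ and absorb the finite $y$-moments of $\mu$ into the coefficients), and by the same constancy argument take a constant value $c_m$ on $\mathrm{spt}(\mu)$. Hence $\mathrm{spt}(\mu)\subset\bigcap_m\{P_m=c_m\}$, an algebraic variety. The principal obstacle throughout is not producing a variety containing $\mathrm{spt}(\mu)$---that step is essentially formal---but \emph{identifying} $\mu$ with $C\mathcal H^k\llcorner V$ as opposed to some non-constant density on $V$ or a measure supported on a proper real-analytic subset of $V$: this identification is where the analytic rigidity encoded by $F(\cdot,s)$ must be interlocked with the metric rigidity afforded by Preiss's second theorem.
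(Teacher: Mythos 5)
First, note that the paper does not prove this statement at all: it is quoted verbatim from Kirchheim--Preiss \cite{kip}, so the comparison must be with the proof in that reference. Your opening move is exactly theirs: the Gaussian moments $F(x,s)=\int e^{-s|y-x|^2}\,d\mu(y)$ converge by the (elementary, volume-comparison) doubling bound $f(2r)\le 5^d f(r)$, are real-analytic in $x$, and by the layer-cake identity are constant equal to $\Phi(s)$ on $\mathrm{spt}(\mu)$; likewise the polynomial moments $P_m$ in the compact case. Up to this point your reconstruction is faithful.

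The genuine gap is the inclusion you do not prove: $V:=\bigcap_{s>0}\{F(\cdot,s)=\Phi(s)\}\subset\mathrm{spt}(\mu)$. Without it, $V$ could contain extra $k$-dimensional analytic pieces carrying no $\mu$-mass, and then $\mu=C\mathcal H^k\llcorner V$ is simply false; this reverse inclusion is the actual content of the Kirchheim--Preiss theorem, and your proposed patch does not supply it. Preiss's rectifiability theorem describes $\mu$ \emph{on its support} (as $C\mathcal H^k$ on a countable union of $C^1$ manifolds defined only up to $\mathcal H^k$-null sets); no amount of ``coherence, stratification, matching of dimensions'' applied to those manifolds can rule out components of $V$ disjoint from $\mathrm{spt}(\mu)$, because the stratification of $V$ is a statement about $V$, not about where $\mu$ lives. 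The correct closing move in \cite{kip} is an asymptotic comparison as $s\to\infty$: if $\delta:=\mathrm{dist}(x,\mathrm{spt}(\mu))>0$, then splitting the integral at radius $\delta$ and using polynomial growth gives $F(x,s)=O\!\left(e^{-s\delta^2/2}\right)$, whereas the elementary lower bound $f(r)\ge c\,r^d$ for small $r$ (valid for any nonzero uniformly distributed measure) forces $\Phi(s)\ge c's^{-d/2}$; hence $F(x,s)<\Phi(s)$ for large $s$ and $x\notin V$. With $V=\mathrm{spt}(\mu)$ in hand, the identification $\mu=C\mathcal H^k\llcorner V$ and the analyticity of $f$ follow from the local structure of analytic varieties and constancy of the density on the top stratum --- no appeal to Preiss's deep rectifiability theorem is needed, which is precisely what makes \cite{kip} self-contained. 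The same defect affects your algebraic case: $\bigcap_m\{P_m=c_m\}$ is again only shown to \emph{contain} the support.
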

As a consequence of the study in Kowalski-Preiss it is clear that even at the germ level, $V$ is much more rigid than stated above, but a geometric characterization is at the moment missing. We only know the complete classification for the case that $f$ coincides with the distribution function $f$ of a subspace of codimension $1$:
\begin{theorem*}[Kowalski-Preiss]
 If in Theorem \ref{kpthm} we have $k=n-1>1$ and $f(r)=cr^{n-1}$, then $V$ from Theorem \ref{kpthm} can be, up to isometry, $\mathbb R^{n-1}$, or $\mathbb R^{n-m-1}\times \mathbb S^m_r$ where $\mathbb S^m_r$ is a $m$-dimensional sphere of radius $r$, or $\mathbb R^{n-4}\times \mathcal C^3$, where $\mathcal C^3:=\{(x_1,x_2,x_3,x_4)\in\mathbb R^4: x_1^2=x_2^2+x_3^2+x_4^2\}$. 
\end{theorem*}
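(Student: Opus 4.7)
My plan is to follow the moment-method strategy inspired by \cite{preiss,kip,kop}. By Theorem \ref{kpthm}, we already know that $V:=\mathrm{spt}(\mu)$ is an $(n-1)$-dimensional analytic variety and $\mu=c\mathcal H^{n-1}\llcorner V$, so the classification is purely geometric: we must determine which codimension-$1$ analytic varieties can carry a measure satisfying $\mu(B(x,r))=cr^{n-1}$ for every $x\in V$ and every $r>0$. To convert this identity into algebraic constraints on $V$, I would consider the Gaussian moment
\begin{equation*}
F(x,s):=\int_V e^{-s|x-y|^2}\,d\mathcal H^{n-1}(y),
\end{equation*}
which by layer-cake integration equals, for every $x\in V$, an explicit function of $s$ alone (a constant multiple of $s^{-(n-1)/2}$). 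Writing $e^{-s|x-y|^2}=e^{-s|x|^2}e^{2s\,x\cdot y}e^{-s|y|^2}$ and expanding in powers of $x$ then produces a countable family of polynomial identities $P_\alpha(x)=0$ that must be satisfied by every $x\in V$.

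Next I would perform a local analysis at a smooth point $p\in V$. Choosing coordinates in which $p=0$ and $T_pV$ is horizontal, $V$ is locally the graph $\{z=u(x)\}$ of an analytic function $u$ with $u(0)=0$ and $\nabla u(0)=0$. Inserting this parametrisation into the identities $P_\alpha=0$, the lowest non-trivial one yields a quadratic algebraic relation on the second fundamental form $A=D^2u(0)$; combined with the higher-order identities, these inductively determine the full Taylor series of $u$ at $p$ as a function of the (locally constant) eigendata of $A$. Propagation by analyticity then forces $u$, and hence the defining equation of $V$, to be globally quadratic, so that $V$ is a connected component of a real quadric hypersurface $\{Q(x)=0\}$ with $\deg Q\le 2$.

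The third step is the explicit classification of the admissible quadrics. Reducing $Q$ to canonical form and using that the polynomial growth $f(r)=cr^{n-1}$ rules out compact connected components, one finds that $V$ must be, up to isometry, either an affine hyperplane, a cylinder $\mathbb R^{n-m-1}\times\mathbb S^m_r$ over a round sphere, or a cylinder $\mathbb R^{n-m-1}\times\mathcal C^m$ over a light cone $\mathcal C^m:=\{x_0^2=x_1^2+\cdots+x_m^2\}$. For each candidate one can compute $\mu(B(x,r))$ explicitly using its rotational symmetry and test the equality $\mu(B(x,r))=cr^{n-1}$ both at a regular point and at the singular (apex) point of the cone; only the three stated configurations pass this test, and among the cone cylinders only $m=3$ survives, due to a dimensional coincidence matching the radial profile at both types of points.

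The main obstacle, in my view, is the second step: translating the infinite tower of moment identities into the sharp conclusion that $V$ is quadratic. The identities are highly nonlinear in $u$ and couple all jets at $p$, so ruling out higher-degree algebraic solutions is far from automatic and constitutes the technical core of the Kowalski--Preiss rigidity. Once quadricity is established, the remaining classification reduces to linear algebra together with a finite case-by-case verification of the density profile.
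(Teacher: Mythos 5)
First, a point of order: the paper does not prove this statement at all --- it is quoted as a known theorem attributed to Kowalski--Preiss \cite{kop} --- so there is no internal proof to compare you against, and I can only measure your outline against the argument in the cited literature. In broad strokes your strategy is the right one: the Gaussian moment $F(x,s)=\int_V e^{-s|x-y|^2}\,d\mathcal H^{n-1}(y)$ does reduce, by layer-cake, to $c's^{-(n-1)/2}$ for $x\in V$, and expanding it does yield a family of algebraic constraints on $V$; this is indeed how Preiss-type arguments begin. But your middle step contains a genuine gap, which you partly acknowledge: the passage from the tower of identities $P_\alpha(x)=0$ to ``$V$ is globally quadratic'' is asserted rather than derived. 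Even granting that the identities determine the full Taylor jet of the local graph $u$ at $p$ from the second fundamental form $A=D^2u(0)$, it does not follow that the resulting series is that of a quadric; and in the actual Kowalski--Preiss argument the reduction is not obtained by jet-matching at a point but by extracting from the low-order moments a single explicit quadratic polynomial vanishing on $\mathrm{spt}(\mu)$ together with curvature identities on the regular part. As written, your induction has no mechanism to close, and this is precisely the technical core of the theorem.

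There is a second, more concrete problem with your step three, and it exposes a tension with the statement as printed. The cylinders $\mathbb R^{n-m-1}\times\mathbb S^m_r$ are uniformly distributed (they are homogeneous), but their density function is \emph{not} $ct^{n-1}$: already for the circle $\mathbb S^1_\rho\subset\mathbb R^2$ one computes $f(t)=4\rho\arcsin\bigl(t/(2\rho)\bigr)=2t+t^3/(12\rho^2)+\cdots\neq 2t$. So the verification you propose --- testing $\mu(B(x,t))=ct^{n-1}$ exactly on each candidate --- would eliminate the sphere cylinders, not confirm them. Under the hypothesis $f(r)=cr^{n-1}$ the Kowalski--Preiss classification in fact leaves only the hyperplane and (for $n\ge 4$) $\mathbb R^{n-4}\times\mathcal C^3$; the sphere cylinders belong to the broader (and open, in general) classification of $(n-1)$-dimensional uniformly distributed measures with unrestricted $f$, and the statement you were handed conflates the two regimes. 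Relatedly, your appeal to ``polynomial growth rules out compact components'' does no work here, since these cylinders are non-compact whenever $m\le n-2$; what rules them out is the exact density identity itself. Carrying out your own final step honestly would have revealed this.
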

A recent result by Nimer \cite{nimer} produces further examples of non-homogeneous uniform measures in codimension higher than $1$ (obtained as a union of tangent spheres in $4$ dimensions and the cone over this set in $5$ dimensions) and a regularity result \cite{nimerreg} saying that singular set of $V$ is of codimension at least $3$ inside $V$, which in particular implies that all $2$-dimensional uniformly distributed measures are nonsingular.

\medskip

\begin{rmk}[locally uniformly distributed $\mu$] Note that a large amount of work has been done to understand the properties of $f(r)$ in the case of uniformly distributed measures. Very little is known for the case of a locally uniformly distributed measure.
\end{rmk}

\medskip

In \cite{kip} the uniformly distributed measures over $\mathbb R$ were completely classified, as were the uniformly distributed measures of bounded support over $\mathbb R^2$. Restricted to these two classes, ``there was no surprise'' in the sense that the only uniformly distributed measures in these two classes are the homogeneous measures.

\begin{theorem}[Kirchheim-Preiss]\label{kpthm2}
A uniformly distributed measure over $\mathbb R$ is either a multiple of the Lebesgue measure, equals up to affine transformations a multiple of the counting measure on $\mathbb Z$, or a multiple of the counting measure on $\mathbb Z +\{0,b\}$, where $b\notin \mathbb Z$.

\medskip

A uniformly distributed measure over $\mathbb R^2$ with bounded support is either a multiple of $\mathcal H^1$ restricted to a circle, or a multiple of the counting measure on the vertices of a regular $n$-gon, or a multiple of the counting measure of the union of vertices of two regular $n$-gons which are inscribed in a common circle.
\end{theorem}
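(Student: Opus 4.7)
The plan is to apply the Marstrand--Mattila--Preiss germ theorem to reduce to finitely many possible local dimensions $k$, and then to classify $V=\mathrm{spt}(\mu)$ via local Taylor analysis combined with a combinatorial rigidity argument.

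For the first statement, the germ theorem forces $k\in\{0,1\}$. If $k=1$, constancy of the $1$-density together with uniformity of $f(r)$ prevents $\mathrm{spt}(\mu)$ from having any gap (a centre close to a gap would give smaller mass in some ball than a centre deep in the interior), so $\mathrm{spt}(\mu)=\mathbb{R}$ and $\mu=c\mathcal{L}^{1}$. If $k=0$, then $\mu=c\sum_{s\in S}\delta_s$ with $S\subset\mathbb{R}$ closed and discrete. After rescaling so that the minimum of $(S-S)\setminus\{0\}$ equals $1$, uniformity forces each $s\in S$ to have the same number $\nu\in\{1,2\}$ of points of $S$ at distance exactly $1$. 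If $\nu=2$, a chain argument along $s,s\pm 1,s\pm 2,\dots$ together with the minimum-distance condition gives $S=\mathbb{Z}$ up to translation. If $\nu=1$, the map $s\mapsto(\text{its unique neighbour at distance }1)$ is a fixed-point-free involution pairing $S$ into dimers $\{a,a+1\}$, and comparing next-nearest distances forces the gap between successive dimers to be a constant $D>0$, whence $S=\mathbb{Z}+\{0,b\}$ up to an affine change with $b=1/(D+1)$.

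For the second statement, Theorem~\ref{kpthm} gives $\mu=c\mathcal{H}^{k}\llcorner V$ with $V$ a bounded algebraic variety and $k\in\{0,1,2\}$. The case $k=2$ is impossible because any $2$-dimensional analytic subvariety of $\mathbb{R}^{2}$ is $\mathbb{R}^{2}$ itself. In the case $k=1$, the density $f(r)/(2r)\to 1$ at every point excludes self-intersections and branch points, so $V$ is a compact analytic $1$-manifold, hence a finite disjoint union of smooth closed analytic curves. Writing any component in arc-length coordinates $\gamma(s)$ gives
\begin{equation*}
|\gamma(s+t)-\gamma(s)|^{2}=t^{2}-\tfrac{\kappa(s)^{2}}{12}t^{4}+O(t^{6})
\end{equation*}
and hence $f(r)=2r+\tfrac{\kappa(s)^{2}}{12}r^{3}+O(r^{5})$, so uniformity forces $\kappa$ to be a nonzero constant $\kappa_{0}$ on $V$ (the case $\kappa_{0}=0$ would give line segments, excluded by boundedness). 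Thus each component is a circle of radius $1/\kappa_{0}$; if two distinct such circles were present, $f(r)$ would differ at two points of one circle with different distances to the centre of the other, contradicting uniformity. Therefore $V$ is a single circle.

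In the case $k=0$ on $\mathbb{R}^{2}$, $\mu=c\sum_{s\in S}\delta_s$ with $S$ finite. Since the distance multiset $\{|s-t|\}_{t\in S}$ is independent of $s$, so is $\sum_{t\in S}|s-t|^{2}$; expanding after translating the centroid of $S$ to the origin gives $|s|^{2}=\mathrm{const}$, so $S$ lies on a circle. Let $G=\langle R_{2\pi/n}\rangle\subset SO(2)$ be the maximal group of rotations preserving $S$: then $S$ is a disjoint union of $G$-orbits, each being the vertices of a regular $n$-gon. The main combinatorial obstacle is to show that there are at most two such orbits. If there were three orbits $O_{1},O_{2},O_{3}$ with angular offsets $0,\phi_{2},\phi_{3}$, writing $D_{\phi}$ for the multiset of folded distances from a point to a regular $n$-gon whose nearest vertex is at angular offset $\phi$ from the point (so $D_{\phi}=D_{\phi+2\pi/n}=D_{-\phi}$), equating total distance multisets from representatives of $O_{1},O_{2},O_{3}$ yields $D_{\phi_{3}}=D_{\phi_{3}-\phi_{2}}=D_{\phi_{2}}$, and one is forced (after excluding the collapses $O_{i}=O_{j}$) into $3\phi_{2}\equiv 0\pmod{2\pi/n}$. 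Then $S=O_{1}\cup O_{2}\cup O_{3}$ is a regular $3n$-gon, whose rotation group $\langle R_{2\pi/(3n)}\rangle$ is strictly larger than $G$, contradicting the maximality of $G$. Higher numbers of orbits are excluded by applying the same rigidity recursively to the set of angular offsets, viewed itself as a uniformly distributed finite subset of $\mathbb{R}/(2\pi/n)\mathbb{Z}$.
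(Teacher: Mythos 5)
First, a point of reference: the paper does not prove Theorem \ref{kpthm2} at all --- it is quoted from Kirchheim--Preiss \cite{kip} --- so there is no in-paper argument to compare yours with, and I am judging your proposal on its own. Much of it is sound: the germ theorem to fix $k$, the second-moment/barycenter trick forcing a bounded $0$-dimensional support onto a circle, the expansion $f(r)=2r+\tfrac{\kappa^{2}}{12}r^{3}+O(r^{5})$ forcing constant curvature, and your three-orbit computation on the circle (the identities $D_{\phi_2}=D_{\phi_3}=D_{\phi_3-\phi_2}$ do force $3\phi_2\equiv 0\pmod{2\pi/n}$ and hence a regular $3n$-gon, contradicting maximality of $G$) are all correct.

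The genuine gap is the case of four or more orbits, which is exactly where your recursion is vacuous. By maximality of $G$, the offset set $\{\phi_1,\dots,\phi_m\}\subset\mathbb{R}/(2\pi/n)\mathbb{Z}$ has \emph{trivial} rotational symmetry, so ``viewing it as a uniformly distributed subset of the quotient circle'' does not decrease the number of points and the recursion never terminates: you still need the base case ``a finite subset of a circle with trivial rotational symmetry, all of whose points see the same distance multiset, has at most two points'', which you only verify for $m=3$. Note also that for $m\ge 4$ the comparison of distance multisets yields only the equality of the aggregated multisets $D_{\phi_3-\phi_i}\cup\cdots\cup D_{\phi_m-\phi_i}$ after cancelling common blocks, not the pairwise identities $D_{\phi_j-\phi_i}=D_{\phi_j-\phi_{i'}}$ that your three-orbit argument exploits, since distinct $D_\gamma$'s can share values. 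This missing combinatorial lemma is the actual core of the bounded-$\mathbb{R}^2$ theorem in \cite{kip} and requires a real argument. Two smaller defects: (i) in the planar $k=1$ case your comparison only rules out exactly two congruent circles; with three or more components the contributions to $f(r)$ from different circles could a priori compensate, so you should first force the whole support onto a single circle via the second-moment identity; (ii) on $\mathbb{R}$ your $\nu=1$ analysis presupposes a bi-infinite chain of dimers and silently discards the finite configurations $\delta_a$ and $\delta_a+\delta_b$, which are uniformly distributed (and which the statement as transcribed in the paper in fact omits), so that degenerate branch needs to be addressed explicitly.
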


We point out that the below questions (stated in increasing order of difficulty) are still open:

\begin{question}[$0$-dimensional uniformly distributed measures]\label{quest0dim}\hfill
\begin{enumerate}
\item Are there any $0$-dimensional uniformly distributed measures which are not homogeneous? \item Are there any $0$-dimensional uniformly distributed measures over $\mathbb R^2$ which are not homogeneous? 
\item Can one classify the $0$-dimensional uniformly distributed measures in $\mathbb R^d$ for $d\ge 2$?
\end{enumerate}
\end{question}

In this paper we consider the case of measures which are the Hausdorff $1$-dimensional measure restricted to a curve $\gamma$ in $\mathbb R^d$. We want to calculate the \emph{local} constraints imposed from the requirement that $\mathcal H^1\llcorner \gamma$ is a uniformly distributed measure. In other words, we require that there exists $f:[0,+\infty)\to[0,+\infty)$ and $r_0>0$ such that for all $r<r_0$ and for all $x\in\gamma$ there holds
\begin{equation}\label{gammaintersball}
\mathcal H^1(\gamma\cap B(x,r)) = f(r).
\end{equation}
It turns out that this translates into a constraint on the curvatures of $\gamma$, a fact which allows us to prove the following improvement of Theorem \ref{kpthm} for the case that $\mu$ has $1$-dimensional connected support. 
\begin{theorem}\label{mainthm}
Let $\mu:=\mathcal H^1\llcorner \gamma$, where $\gamma$ is a curve in $\mathbb R^d$. The following are equivalent:
\begin{enumerate} 
\item $\mu$ is a uniformly distributed measure. 
\item $\gamma$ is a toric knot or a generalized helix, in particular all the curvatures of $\gamma$ are constant.
\item $\mu$ is a homogeneous measure.
\end{enumerate}
\end{theorem}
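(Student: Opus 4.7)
The plan is to prove the cycle $(3)\Rightarrow(1)\Rightarrow(2)\Rightarrow(3)$. The step $(3)\Rightarrow(1)$ is immediate: if a group $G$ of $\mu$-preserving isometries acts transitively on $\mathrm{spt}(\mu)$, any two points of $\gamma$ are related by an element of $G$, so $r\mapsto \mu(B(x,r))$ does not depend on $x\in\gamma$. For $(2)\Rightarrow(3)$ I rely on the classical fact that a $C^\infty$ curve in $\mathbb R^d$ all of whose Frenet curvatures are constant can be written as the orbit $\gamma(s)=\exp(sA)\cdot p_0$ of a one-parameter subgroup of the Euclidean group $\mathrm{SE}(d)$; this subgroup acts on $\gamma$ by arc-length translation, transitively, and preserves $\mathcal H^1\llcorner\gamma$. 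The toric-knot case corresponds to a closed orbit and the generalized-helix case to a non-closed orbit, dense on a flat sub-torus of $\mathbb R^d$.

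The substance of the theorem lies in $(1)\Rightarrow(2)$. By the Kirchheim--Preiss Theorem \ref{kpthm}, $\gamma$ is a real-analytic $1$-variety, so we parametrize by arc length and introduce its Frenet frame and curvatures $\kappa_1(s),\ldots,\kappa_{d-1}(s)$. For $r$ below the local injectivity radius at $\gamma(s_0)$, the intersection $\gamma\cap B(\gamma(s_0),r)$ is a single sub-arc of length $L(s_0,r)=u_+(s_0,r)-u_-(s_0,r)$, where $u_\pm$ are the two solutions with $u_-<0<u_+$ of $|\gamma(s_0+u)-\gamma(s_0)|^2=r^2$. The Frenet--Serret equations give the Taylor expansion
\[
|\gamma(s_0+u)-\gamma(s_0)|^2 \;=\; u^2 - \tfrac{\kappa_1(s_0)^2}{12}\,u^4 - \tfrac{\kappa_1(s_0)\kappa_1'(s_0)}{12}\,u^5 + \sum_{m\ge 6} c_m(s_0)\,u^m,
\]
each $c_m(s_0)$ being a universal polynomial in the values $\kappa_j^{(\ell)}(s_0)$. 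Power-series inversion and antisymmetrization then yield
\[
L(s_0,r) \;=\; 2r + \tfrac{\kappa_1(s_0)^2}{12}\,r^3 + \sum_{k\ge 2} L_{2k+1}(s_0)\,r^{2k+1},
\]
with all even powers of $r$ cancelling in $u_+-u_-$. Uniform distribution forces every coefficient $L_{2k+1}(s_0)$ to be independent of $s_0$.

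The conclusion comes from an induction on $k$ which pins down each curvature. The constancy of $L_3=\kappa_1^2/12$ gives $\kappa_1$ constant. For the inductive step, assume $\kappa_1,\ldots,\kappa_{k-1}$ are already constant, so all their $s$-derivatives vanish and many intermediate terms drop out. Examining the Frenet--Serret recursion, $\kappa_k$ first appears in $\gamma^{(k+1)}$ through the leading component $\kappa_1\cdots\kappa_k\,N_k$, and first enters the tangential scalar $\langle T,\gamma^{(m)}\rangle$ at $m=2k+1$ with coefficient $(-1)^k\kappa_1^2\cdots\kappa_k^2$. Combining this with the $\kappa_k^2$ contribution from $\|\gamma^{(k+1)}\|^2$ to the $u^{2k+2}$-coefficient of $F$, one verifies that the net coefficient of $\kappa_k^2$ in $L_{2k+3}(s_0)$ is a nonzero universal rational multiple of $\kappa_1^2\cdots\kappa_{k-1}^2$, which is a nonzero constant by induction; hence $\kappa_k^2$, and so $\kappa_k$, is constant. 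The delicate step, and the main obstacle, is exactly this non-degeneracy check: showing that when summing over all index splittings $j+l=2k+2$ the various contributions $\langle\gamma^{(j)},\gamma^{(l)}\rangle/(j!\,l!)$ to the monomial $\kappa_1^2\cdots\kappa_k^2$ do not conspire to cancel. Once this is established, all curvatures are constant, conclusion (2) holds, and the cycle closes.
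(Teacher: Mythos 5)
Your overall architecture coincides with the paper's: $(3)\Rightarrow(1)$ by transitivity of the isometry group, $(2)\Rightarrow(3)$ by realizing a constant-curvature curve as the orbit of a one-parameter subgroup of $\mathrm{SE}(d)$ (the paper's Proposition \ref{mainthmpart2}), and $(1)\Rightarrow(2)$ by expanding $|\gamma(s_0+u)-\gamma(s_0)|^2$, inverting the power series, antisymmetrizing, and running an induction on the curvatures. Your expansion $L(s_0,r)=2r+\tfrac{\kappa_1^2}{12}r^3+\cdots$ matches \eqref{findfr} with $C_3=0$, and the identification of where $\kappa_k$ first enters (the $u^{2k+2}$-coefficient, via $\langle\gamma',\gamma^{(2k+1)}\rangle$ and $\|\gamma^{(k+1)}\|^2$) agrees with the paper's Steps 6--8. (Minor indexing slip: the relevant coefficient of $L$ is $L_{2k+1}$, not $L_{2k+3}$.)

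The genuine gap is exactly the point you flag yourself: you write that ``one verifies that the net coefficient of $\kappa_k^2$ \ldots\ is a nonzero universal rational multiple of $\kappa_1^2\cdots\kappa_{k-1}^2$,'' and then concede that this non-degeneracy check is ``the main obstacle'' --- but you never carry it out. This is not a routine verification: a priori the coefficient of the monomial $(\kappa_1\cdots\kappa_k)^2$ in $C_{2k+2}$ receives contributions from every splitting $\alpha+\beta=2k+2$ of $\langle\gamma^{(\alpha)},\gamma^{(\beta)}\rangle/(\alpha!\,\beta!)$, each carrying its own sign from the skew-symmetry of the Frenet--Serret matrix, and cancellation must be excluded. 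The paper does this (Steps 5 and 8) by encoding each Frenet-frame component of $\gamma^{(\alpha)}$ as a signed monomial attached to a lattice path on the graph $*\to E_1\to\cdots\to E_n$; a pair of paths contributing to the $\kappa_k$-monomial must concatenate to the unique backtrack-free loop reaching $E_{k+1}$, and the accumulated sign is $(-1)^{\alpha'+\beta'}=(-1)^{2k+2}=+1$ for every splitting, so all contributions are positive and the total coefficient is strictly positive. Without this (or an equivalent) argument your induction does not close. Two smaller omissions, both handled in the paper and needed for your argument to be complete: the global existence of the Frenet frame at possibly degenerate points (the paper invokes analyticity from Theorem \ref{kpthm} together with Nomizu's theorem), and the passage from constancy of $\kappa_k^2$ to constancy of $\kappa_k$ itself (a continuity/sign argument), as well as the vanishing of the odd coefficients $C_{2k+1}$, which is what guarantees that constancy of the odd-power coefficients of $L$ actually pins down $C_{2k+2}$ rather than a combination involving unknown odd terms.
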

In order to explain what is meant in the above second point, we recall this classical differential geometry topic here, see 2.16 of \cite{Ku} for details.

\begin{proposition}[Classification of constant curvature curves]\label{propclassifcurv}
 If $\gamma\subset \mathbb R^d$ is a curve all of whose curvatures are constant, and assuming that $\mathrm{AffSpan}(\gamma)=\mathbb R^d$ (or, equivalently, that all curvatures of $\gamma$ are nonzero), then up to translation and rotation, $\gamma$ can be parametrized as follows
 \begin{itemize}
  \item$\gamma(t)=\left(r_1\cos(\alpha_1t),r_1\sin(\alpha_1t),\dots, r_{\frac{d}{2}} \cos\left(\alpha_{\frac{d}{2}}t\right),r_{\frac{d}{2}},\sin\left(\alpha_{\frac{d}{2}}t\right)\right)$ if $d$ is even, 
  \item $\gamma(t)=\left(r_1\cos(\alpha_1t),r_1\sin(\alpha_1t),\dots, r_{\frac{d-1}{2}} \cos\left(\alpha_{\frac{d-1}{2}}t\right),r_{\frac{d-1}{2}}\sin\left(\alpha_{\frac{d-1}{2}}t\right), bt\right) $if $d$ is odd,
\end{itemize}
where the rotation speeds $\alpha_1,\ldots,\alpha_{\left\lfloor\frac{d}{2}\right\rfloor}$ are nonzero and the radiii $r_1,\ldots,r_{\left\lfloor\frac{d}{2}\right\rfloor}$ as well as the translation speed $b$ are positive.
\end{proposition}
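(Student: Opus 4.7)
The plan is to invoke the Frenet--Serret theorem and reduce the proposition to a matrix exponential computation. Parametrize $\gamma$ by arclength $s$ and let $\tilde{E}(s) \in SO(d)$ be the orthogonal matrix whose $i$-th column is the $i$-th Frenet vector $e_i(s)$. Since the generalized curvatures $\kappa_1,\dots,\kappa_{d-1}$ are constant, the Frenet--Serret equations read $\tilde{E}'(s) = \tilde{E}(s) M$, where $M$ is the constant tridiagonal skew-symmetric matrix with $M_{i+1,i} = \kappa_i = -M_{i,i+1}$. Consequently $\tilde{E}(s) = \tilde{E}(0)\, e^{sM}$, and since $\gamma'(s) = e_1(s)$ is the first column of $\tilde{E}(s)$,
\begin{equation*}
  \gamma(s) - \gamma(0) \;=\; \tilde{E}(0) \int_0^s e^{\tau M}\, \hat{e}_1\, d\tau.
\end{equation*}
Rotating $\gamma$ by $R \in SO(d)$ replaces $\tilde{E}(0)$ by $R \tilde{E}(0)$, so we may take $\tilde{E}(0)$ to be any orthogonal matrix we choose.

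Next I would put $M$ in canonical real skew-symmetric form: there exists $Q \in O(d)$ with $Q M Q^T = D$ block-diagonal, with $2 \times 2$ blocks $\begin{pmatrix} 0 & -\alpha_j \\ \alpha_j & 0 \end{pmatrix}$ for $j=1,\dots,\lfloor d/2\rfloor$, plus a single zero entry when $d$ is odd. The decisive spectral lemma is that under the nonvanishing hypothesis on the $\kappa_i$, the matrix $M$ has rank $d$ in the even case and rank $d-1$ in the odd case, so that all $\alpha_j > 0$ and the kernel direction appears precisely when $d$ is odd. With the choice $\tilde{E}(0) = Q$ and the identity $Q e^{sM} = e^{sD} Q$, the problem reduces to integrating $e^{sD}(Q \hat{e}_1)$. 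Decomposing $v_0 := Q \hat{e}_1$ along the invariant subspaces of $D$, each $2\times 2$ block contributes a circular motion, while the kernel contributes a linear one; block-by-block integration yields in each $2\times 2$ slot a term $r_j(\cos(\alpha_j s + \phi_j), \sin(\alpha_j s + \phi_j))$ plus a constant, and in the kernel slot (if present) a term $bs$. A further block-diagonal rotation commuting with $D$ absorbs the phases $\phi_j$ into the coordinate axes, and a global translation absorbs the integration constants, producing exactly the claimed parametrization.

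The main obstacles are algebraic. First, the spectral rank statement for $M$ must be extracted from its Jacobi structure; the cleanest route is the Pfaffian identity $\mathrm{Pf}(M) = \kappa_1 \kappa_3 \cdots \kappa_{d-1}$ in the even case, proved by Laplace-style expansion along the outermost row and column, together with a direct kernel inspection in the odd case. Second, the positivity $r_j > 0$ and $b > 0$ must be derived from the full-affine-span hypothesis; this is equivalent to $\hat{e}_1$ having nonzero projection onto every $M$-invariant subspace, which follows from the observation $M^k \hat{e}_1 = (\kappa_1 \kappa_2 \cdots \kappa_k)\, \hat{e}_{k+1} + \text{lower order terms}$, so that $\hat{e}_1$ is a cyclic vector for $M$ whenever all $\kappa_i \neq 0$ and cannot lie in any proper $M$-invariant subspace. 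This cyclic-vector argument also yields the equivalence with $\mathrm{AffSpan}(\gamma) = \mathbb{R}^d$ asserted in the statement.
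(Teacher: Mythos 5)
The paper does not actually prove this proposition: it is quoted as classical, with a pointer to 2.16 of the cited Kühnel reference, and the argument there is essentially the one you give (constant-coefficient Frenet--Serret system, matrix exponential, orthogonal normal form of the skew-symmetric curvature matrix, block-by-block integration). Your write-up is correct; the only point worth making explicit is that your cyclic-vector observation $M^k\hat e_1=(\kappa_1\cdots\kappa_k)\,\hat e_{k+1}+\text{lower order terms}$ does double duty: since a normal matrix admitting a cyclic vector must have simple spectrum, it forces the frequencies $\alpha_j$ to be pairwise distinct, which is what makes the decomposition of $\mathbb R^d$ into $M$-invariant planes canonical and legitimizes speaking of \emph{the} projection of $v_0$ onto each block when deriving $r_j>0$ and $b>0$.
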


The curves as in the first part of the above proposition are called \emph{toric curves} and the curves as in the second part are called \emph{helices}. A closed (or equvalently, periodic) toric curve is called a \emph{toric knot}.

\begin{rmk}
Note that all helices give rise to uniform measures $\mathcal H^1\llcorner \gamma$, whereas in the toric curve case $\mathcal H^1\llcorner\gamma$ is locally finite only if the curve $\gamma$ is actually a toric knot.
\end{rmk}
We can extend the statement of Theorem \ref{mainthm} to a necessare condition the non-connected case with following result, which can still yield partial classifications in combination with Theorem \ref{kpthm2}.

\begin{theorem}\label{mainthm2}
Let $\mu$ be a uniformly distributed measure of dimension $1$ in $\mathbb R^d$. Then there exist 
\begin{enumerate}
\item an integer $1\le k\le d$ and a curve $\gamma_0 \subset \mathbb R^k\times\{0\}^{d-k}\subset \mathbb R^d$ with constant curvatures $\kappa_j$ which are zero if and only if $j\ge k$, 
\item a discrete set $X\subset \mathrm{Isom}(\mathbb R^d)$,
\end{enumerate}
such that $\mu$ is, up to isometry of $\mathbb R^d$, equal to
\[
c\mathcal H^1\llcorner \left(\bigcup_{R\in X}R\gamma_0\right).
\]
Furthermore, each pair of the above isometric copies of $\gamma_0$ are at constant positive distance from each other, and if $\gamma_0$ is a helix then all the copies $R\gamma_0$ have axes parallel to the one of $\gamma_0$.
\end{theorem}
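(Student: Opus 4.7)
The strategy is to decompose the support of $\mu$ into connected components, apply Theorem \ref{mainthm} componentwise, and then exploit analyticity of $f$ to control the relative position of the components. By Theorem \ref{kpthm} one can write $\mu=C\mathcal{H}^1\llcorner V$ for a $1$-dimensional analytic variety $V\subset\mathbb R^d$; decompose $V=\bigsqcup_\alpha\gamma_\alpha$ into its connected components. For each $x\in\gamma_\alpha$ the distance $\rho(x):=\mathrm{dist}(x,V\setminus\gamma_\alpha)$ is strictly positive, and for every $0<r<\rho(x)$ one has $\mathcal{H}^1(B(x,r)\cap\gamma_\alpha)=\mu(B(x,r))=f(r)$, so $\mathcal{H}^1\llcorner\gamma_\alpha$ is locally uniformly distributed. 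The proof of Theorem \ref{mainthm} being local in nature, it then gives that every $\gamma_\alpha$ has constant curvatures; moreover the germ of $f$ at $0$ is a universal analytic function of those curvatures, so all $\gamma_\alpha$ share the same curvature sequence and, by Proposition \ref{propclassifcurv}, are Euclidean copies of a common model curve $\gamma_0\subset\mathbb R^k\times\{0\}^{d-k}$, where $k$ is the dimension of the affine hull of $\gamma_0$.

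The heart of the argument is to show that pairwise distances between distinct components are constant. Fix $\alpha\ne\beta$ and $x\in\gamma_\alpha$, and set $d_\beta(x):=\mathrm{dist}(x,\gamma_\beta)$. A second-order Taylor expansion at a nearest point of $\gamma_\beta$ to $x$ yields
\[
\mathcal{H}^1(B(x,r)\cap\gamma_\beta)\sim c(x)\sqrt{r-d_\beta(x)}\quad\text{as }r\downarrow d_\beta(x),\quad c(x)>0,
\]
so this contribution has infinite right derivative at $r=d_\beta(x)$. Each individual summand of
\[
f(r)=\sum_\gamma\mathcal{H}^1(B(x,r)\cap\gamma_\gamma)
\]
is nondecreasing in $r$, hence no cancellation is possible and $f$ fails to be $C^1$ at every collision radius $d_\gamma(x)$ with $\gamma\ne\alpha$. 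Since $f$ does not depend on $x$, these radii lie in a fixed set $S$ of non-smooth points of $f$; piecewise analyticity provided by Theorem \ref{kpthm} together with local finiteness of the components inside any bounded region force $S$ to be discrete. Continuity of $d_\beta$ on the connected set $\gamma_\alpha$ then gives that $d_\beta$ is constant. Discreteness of the set $X\subset\mathrm{Isom}(\mathbb R^d)$ of isometries sending $\gamma_0$ onto the various components follows from the resulting uniform positive lower bound on pairwise distances.

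It remains to treat the helix case. If $\gamma_0$ has axis direction $v_0$, I would show that any $R\gamma_0$ must have axis direction parallel to $v_0$ by a direct geometric computation: parametrizing $\gamma_1=\gamma_0$ via its screw-motion orbit $\{S_tx\}_{t\in\mathbb R}$, the function $t\mapsto\mathrm{dist}(S_tx,\gamma_2)$ is nonconstant as soon as the two axes are skew or cross at a positive angle, contradicting the constancy established in the previous step. The main obstacle I foresee is the non-cancellation claim of the second paragraph: one has to rule out higher-order contact between $x$ and $\gamma_\beta$ (which would soften the leading $\sqrt{r-d_\beta(x)}$ singularity into a weaker one that could conceivably be absorbed) and to ensure that the set $S$ of collision radii is genuinely discrete rather than merely closed. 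Both points rest on the monotonicity of the individual summands and on the analyticity statement of Theorem \ref{kpthm}, but they require a careful bookkeeping of contact orders and of the accumulation of components inside any ball.
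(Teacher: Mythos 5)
Your overall architecture (decompose $\mathrm{spt}(\mu)$ into connected components via Theorem \ref{kpthm}, obtain constant and equal curvatures componentwise from the germ of $f$ at $0$, then prove constancy of pairwise distances and parallelism of axes) matches the paper's, but your mechanism for the central step --- constancy of $d_\beta(x)=\mathrm{dist}(x,\gamma_\beta)$ on $\gamma_\alpha$ --- is genuinely different. The paper (Lemma \ref{nonaccum} plus the proof of Theorem \ref{mainthm2}) argues through ``first neighbors'': the number of components realizing the minimal distance $d_1$ from $x$ is independent of $x$ because $\lim_{r\downarrow d_1}\mu(B(x,r))$ is, the sets $S_j=\{x\in\gamma_1:\ \mathrm{dist}(x,V\setminus\gamma_1)=\mathrm{dist}(x,\gamma_j)\}$ cover $\gamma_1$ countably, analyticity upgrades an accumulation point of $S_j$ to $S_j=\gamma_1$, and the argument is then iterated over second, third, and further neighbor shells. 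You instead exploit the singularity of $f$ at every collision radius, which treats all pairs of components simultaneously and dispenses with the shell induction; this is a legitimate and arguably cleaner route.

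The two loose ends you flag are real but repairable. First, higher-order contact does not soften the singularity enough to hurt you: the analytic function $t\mapsto|\gamma_\beta(t)-x|^2$ is either constant on $\gamma_\beta$ (then $f$ actually jumps at $r=d_\beta(x)$) or has minimizers of some finite even order $2m$, so the contribution of $\gamma_\beta$ grows like $(r-d_\beta(x))^{1/(2m)}$, which still has infinite right difference quotient; monotonicity of the other summands prevents cancellation. Second, you should not lean on ``piecewise analyticity'' of $f$ away from $0$ --- Theorem \ref{kpthm} as stated only gives analyticity on $[0,r_0)$ --- and you do not need discreteness of $S$: since $f$ is nondecreasing, Lebesgue's differentiation theorem for monotone functions makes the set of radii with infinite upper right Dini derivative a null set, so the continuous image $d_\beta(\gamma_\alpha)\subset S$ is a connected null set, hence a single point. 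With these substitutions your argument closes and establishes the same conclusions (constant positive pairwise distances, discreteness of $X$ from local finiteness, parallel axes by the unboundedness of the distance between non-parallel helices) as the paper's proof.
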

We are tempted to conjecture that the above set $X$ has further restrictions to its structure, like in the case $d=3$, see proposition below, but we leave the study of the general form of this result to future work.
\begin{proposition}\label{3d}
Let $\mu$ be a uniformly distributed measure of dimension $1$ in $\mathbb R^3$. Then $\mu$ has one of the following forms:
\begin{enumerate}
 \item Either $\mu$ is the product of a $0$-dimensional uniform measure in $H$ with a $1$-dimensional uniform measure in $H^\perp$, where $H\subset \mathbb R^2$ is an affine subspace of dimension $1$ or $2$
 \item Or up to isometry $\mu=c\mathcal H^1\llcorner(X+\gamma)$, where $\gamma(t)=(r_1\sin(\alpha t), r_1\cos(\alpha t), bt)$ and $X=\{0\}^2\times\left(\frac{2\pi b}{n}\mathbb Z\cup \left(a+\frac{2\pi b}{n}\mathbb Z\right)\right)$ with $n\in\mathbb N$ and $a\in\mathbb R$. 
\end{enumerate}
In the latter case $\mu$ is a homogeneous measure.
\end{proposition}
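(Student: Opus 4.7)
The plan is to apply Theorem \ref{mainthm2} to reduce the proposition to analyzing the possible configurations of isometric copies of a constant-curvature curve $\gamma_0 \subset \mathbb{R}^k \times \{0\}^{3-k}$ ($1 \le k \le 3$) that are at constant, positive, pairwise distance. The three cases $k = 1, 2, 3$ correspond to $\gamma_0$ being a line, a circle, and a helix; case (1) of the proposition should arise in the first two and case (2) in the third.

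For $k=1$, two lines in $\mathbb{R}^3$ at constant positive mutual distance must be parallel (skew lines have non-constant distance). The support is therefore $\gamma_0 + Y$ for some discrete $Y$ in the $2$-plane $H := \gamma_0^\perp$; a Fubini argument yields $\mu = \#_Y \otimes \mathcal{H}^1 \llcorner \gamma_0$, and uniformity of $\mu$ directly transcribes to $\#_Y$ being $0$-dimensionally uniform on $H$, giving case (1). For $k=2$, I check that two same-radius circles in $\mathbb{R}^3$ at constant positive mutual distance must be coaxial in parallel planes (other configurations---non-parallel planes, parallel planes with shifted axes, coplanar arrangements---have variable mutual distance). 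The same Fubini reasoning, now with $H$ the common axis (a line), yields case (1), using Theorem \ref{kpthm2} to identify $\mathcal{H}^1 \llcorner \gamma_0$ as $1$-dimensionally uniform in $H^\perp$.

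For $k=3$, I place the axis of $\gamma_0$ along $e_3$; Theorem \ref{mainthm2} says all other axes are parallel to $e_3$, and the first key step is to show they in fact all coincide. A direct computation of $\mathrm{dist}(\gamma_1(t_0), \gamma_2)$ for helices $\gamma_1, \gamma_2$ with parallel but distinct vertical axes shows this distance is non-constant in $t_0$, contradicting Theorem \ref{mainthm2}. Once all copies are coaxial, the identity ``rotation about the axis by angle $\theta$ equals vertical translation by $-b\theta/\alpha$ on the image of $\gamma_0$'' shows that every coaxial isometric copy of $\gamma_0$ is a pure vertical translate $\gamma_0 + (0,0,y)$. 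Hence $X = \{0\}^2 \times Y$ for some $Y \subset \mathbb{R}$, which is automatically $T$-periodic with $T = 2\pi b/\alpha$ (the vertical period of $\gamma_0$).

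To conclude case $k=3$, I transcribe the uniformity of $\mu$ into a condition on $Y$: with $D(s, \delta) := 2 r_1^2 (1 - \cos(\alpha s)) + (bs + \delta)^2$ the squared distance between points on coaxial helices at parameter separation $s$ and vertical offset $\delta$, the quantity $\sum_{y \in Y} |\{s : D(s, y_0 - y) \le r^2\}|$ must be independent of $y_0 \in Y$. This is precisely the uniformity of $Y$ modulo $T$ as a $0$-dimensional measure on the circle $\mathbb{R}/T\mathbb{Z}$. Lifting via the $T$-periodicity and applying the $\mathbb{R}$ case of Theorem \ref{kpthm2} then forces $Y$, up to translation, to equal $\frac{T}{n}\mathbb{Z}$ or $\frac{T}{n}\mathbb{Z} \cup (a + \frac{T}{n}\mathbb{Z})$, giving case (2); homogeneity follows from transitivity of vertical translations by $T/n$ combined, when applicable, with the sublattice-swap isometry. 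I expect the main obstacle to be the helix case: verifying non-constancy of distance for parallel but distinct axes, and cleanly setting up the $T$-periodic uniformity so as to invoke Theorem \ref{kpthm2}.
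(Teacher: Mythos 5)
Your plan follows the same overall strategy as the paper --- reduce via Theorem \ref{mainthm2} to congruent constant-curvature components at constant mutual distance, show that helix components must be coaxial, and then classify the resulting vertical offset set by invoking Theorem \ref{kpthm2} --- but the two arguments diverge in the details. You treat the degenerate cases $k=1,2$ explicitly (the paper simply asserts that a vanishing curvature puts one in case (1)); you prove coaxiality by direct computation of the non-constancy of the point-to-helix distance, where the paper uses the extremal points $x_\pm$ in the direction of the axis offset; and for the final classification you reduce to a $0$-dimensional uniform measure on $\mathbb R/T\mathbb Z$, lift, and invoke the $\mathbb R$-case of Theorem \ref{kpthm2}, whereas the paper quotients by the period group to obtain a uniform measure on a circle and invokes the bounded-support $\mathbb R^2$-case (two inscribed regular $n$-gons). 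Both routes land on the same answer; the paper's quotient produces a bona fide uniform measure on a metric space to which the known classification applies directly, while your route stays in $\mathbb R$ but must justify more.

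Two concrete points need attention. First, the step ``this is precisely the uniformity of $Y$ modulo $T$'' (and likewise ``directly transcribes'' in the line case) is not a transcription: uniformity of $\mu$ tests $Y$ against the weights $\delta\mapsto\left|\{s: D(s,\delta)\le r^2\}\right|$, which for small $r$ behave like $c_1\bigl(c_2r^2-\delta^2\bigr)_+^{1/2}$ near each period, not against indicator functions of balls in $\mathbb R/T\mathbb Z$. You must invert this (e.g.\ by an Abel-transform argument, or by tracking the singularities of $r\mapsto f(r)$) to recover the ball-counting condition, and then use $T$-periodicity to pass from small radii to all radii before Theorem \ref{kpthm2} applies. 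Second, in deducing that every coaxial isometric copy of $\gamma_0$ is a vertical translate, you only account for orientation-preserving screw motions: an isometric copy could be the opposite-chirality mirror helix on the same axis, which is not of the form $\gamma_0+(0,0,y)$. It is excluded because its distance to $\gamma_0$ is non-constant (the screw symmetry of $\gamma_0$ shifts the phase of the mirror helix), but this case must be ruled out explicitly.
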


Let us remark that in the first case a complete classification rely on an answer to question \ref{quest0dim} with $d=2$. We expect that in higher dimension a similar result, that is to say: the actions of the discrete group of isometries $X$ reduces to each plane of rotation to one or two regular-polygons, see proposition  11 of \cite{kip}, to a $1$-dimensional measure along the axis of the helix as described in point (2) above and finally to a $0$-dimensional measure in the orthogonal of the span affine of the generating curve.

\medskip

\textbf{Acknowledgements:} The authors acknowledge the Institut Henri Poincar\`e for a ``Research in Paris'' grant in July 2018, which allowed essential progress on this paper. MP was supported by the \emph{Fondecyt Iniciaci\'on} grant 11170264 ``Sharp Asymptotics for Large Particle Systems and Topological Singularities''. MP wishes to thank Dali Nimer for enlightening discussions on related topics and support from the Max Planck Institute for Mathematics in Bonn in 2017, which allowed these discussions to take place.

\medskip

\section{Proof of Theorem \ref{mainthm}}
\subsection{Taylor polynomial expansion and preliminaries}
We initially follow the natural setup already utilized in \cite{kop}, which only uses the assumption that $\gamma$ is $C^k$ for $k$ big enough (here we assume $k\geq 3$). We will use the Taylor polynomial approximations of $\gamma, \gamma'$ which we denote as follows:
\begin{equation}
\label{gamma}
\gamma(x)=A_1x + A_2x^2 + \cdots + A_kx^k +R(x)
\end{equation}
where $A_j\in\mathbb R^n, j=1,\ldots,k$ and $\bar A_{j-1}=\frac{A_j}{j}$, and $R(x)=O(x^{k+1}), \bar R(x)=O(x^k)$. Then we have
\begin{eqnarray}
|\gamma(x)|^2 &=& \sum_{2\le j<k+2}\left(\sum_{h+h'=j, h,h'\ge1}\langle A_h, A_{h'}\rangle\right)x^j + O(x^{k+2})\nonumber\\
&=&\sum_{2\le j<k+2}C_jx^j + O(x^{k+2}),\label{coeffc}
\end{eqnarray}
We will parameterize $\gamma$ by arclength, and thus 
\begin{equation}\label{normal1}
 |\gamma'|\equiv 1.
\end{equation}
Due to this normalization, in \eqref{coeffc} we have $C_2=\langle A_1,A_1\rangle=1$, thus
\begin{equation}\label{eqgammasquare}
 |\gamma(x)|^2= x^2 +\sum_{3\le j<k+2}C_j x^j + O(x^{k+2}).
\end{equation}
In order to test condition \eqref{gammaintersball} we have to integrate $|\gamma'|$ over the region 
\begin{equation}\label{ballintersgamma}
 \gamma\cap B(r,0)=\{x:\ |\gamma(x)|^2 \le r^2\},
\end{equation}
and as we normalized $|\gamma'|\equiv 1$ the function $f$ from \eqref{gammaintersball} is given by the equation
\begin{equation}\label{definf}
 f(r)=f_+(r)-f_-(r),\quad \mbox{where }f_\pm(r)\mbox{ are the two solutions of }\quad |\gamma(f(r))|^2=r^2.
\end{equation}
We will profit of our normalizations in order to solve \eqref{definf}, and what we use is the formula for the formal solution of $G\circ F=r^2$ where $G,F$ are power series in $r$, with $G$ being the right hand side of \eqref{eqgammasquare} and $F$ being the taylor polynomial for $f$, which we assume to also be $C^k$-regular, so that
\begin{equation}\label{eqtaylorf}
 f(s)=\sum_{1\le j<k+2} c_js^j + O(s^{k+2}).
\end{equation}
The formal solution $F$ for the power series equation $G(F(r))=r^2$ has terms up to order $k$ which coincide with the $k$-th Taylor polynomial of $f$.

\medskip

By explicit computation, from \eqref{eqgammasquare} and with the notation \eqref{eqtaylorf}, the first terms in the expansion of $|\gamma(f(s))|$ around the value $s=0$ read as follows 

\be
\begin{split}
|\gamma(f(s))|^2 &=c_1^2s^2+(2c_1c_2 + c_1^3C_3)s^3+(c_2^2+2c_3c_1+ 3c_1^2c_2C_3+c_1^4C_4)s^4+O(s^5)
\end{split}
\ee

Hence for the two solutions $f_+>0>f_-$ of the functional equation $|\gamma(f(s))|^2=s^2$ we require that $c_1=\pm 1$ in order to match the coefficients of $s^2$, and the values of the $c_j,j>1$ are uniquely determined, giving 
\be\label{formula_fpm}
\begin{split}
f_\pm(s)&=\pm s-\frac{C_3}{2}s^2 \mp \frac{1}{2} \left(C_4 -5\frac{C_3^2}{4}\right)s^3+ O(s^4)
\end{split}
\ee
and thus
\begin{equation}\label{findfr}
f(r)= f_+(r) -f_-(r)=2r-\left(C_4-5\frac{C_3^2}{2}\right)r^3+O(r^4).
\end{equation}
As $\mathcal H_1\llcorner\gamma$ is a uniformly distributed measure, in particular this expression does not depend on the choice of an origin along $\gamma$, and the coefficients $C_4$ appearing above need to be constant. The coefficient $C_3$ seems to be free but in fact it is vanishing, indeed since the parametrization is done by arclenght, we have $C_3=2\langle \gamma'(0),\gamma''(0)\rangle=0$. The generalization of this formula will be part of our proof in Section \ref{proofsprop} below.

\medskip

Our aim is to show that we may first choose special coordinates so that some of the coefficients disappear, and so that the remaining coefficients have a simple geometric interpretation. We thus completely classify the possible $\gamma$ as being exactly the constant-curvature curves. We prove the following:

\begin{proposition}\label{mainthmpart1}
 Assume that $d\ge 1$ and $\gamma$ is a $C^{2d}$-regular curve parameterized by arclength, $\gamma:\mathbb R\to\mathbb R^d$ and let $\mu:=\mathcal H^1\llcorner \gamma$. If $[a,b]\subset \mathbb R$ is an interval and we have
 \[
\sup_{p,q\in[a,b]}\lim_{r\downarrow 0}\frac{\mu(B(\gamma(p),r))-\mu(B(\gamma(q),r))}{r^{2d}}=0,
 \]
then $\gamma|_{[a,b]}$ has constant curvatures $\kappa_1,\ldots,\kappa_{d-1}$.
\end{proposition}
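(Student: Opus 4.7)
My plan is to extend the Taylor expansion of $f(r)=\mathcal{H}^1(\gamma\cap B(\gamma(x_0),r))$ up to the order $r^{2d-1}$ allowed by the $C^{2d}$ regularity, and to exploit the hypothesis---which amounts to saying that the $d-1$ nontrivial coefficients through this order do not depend on the basepoint $x_0\in[a,b]$---in order to recover constancy of the curvatures $\kappa_1,\ldots,\kappa_{d-1}$ by induction on the index of the curvature.

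The first observation is that $f(r)$ is odd as a formal power series. Indeed, the two branches $f_\pm$ of \eqref{definf} satisfy $f_-(r)=-f_+(-r)$ coefficient-wise, because substituting $r\mapsto -r$ in $|\gamma(y)|^2=r^2$ leaves the equation unchanged and interchanges the two initial conditions $f_\pm'(0)=\pm 1$; by uniqueness of the analytic branches this forces the identity, and hence
\[
f(r)=2r+2a_3r^3+2a_5r^5+\cdots+2a_{2d-1}r^{2d-1}+o(r^{2d}),
\]
where $a_{2k+1}$ is a universal polynomial in $C_3,\ldots,C_{2k+2}$ obtained by inverting \eqref{eqgammasquare}. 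The hypothesis then says exactly that $a_{2k+1}$ is independent of $x_0$ for each $k=1,\ldots,d-1$.

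The second step translates each $C_j$ into Frenet data at $\gamma(x_0)$. Using arclength together with the Frenet--Serret relations $T_i'=-\kappa_{i-1}T_{i-1}+\kappa_iT_{i+1}$, the derivative $\gamma^{(j)}(x_0)=j!A_j$ expands as a polynomial in the Frenet vectors $T_1,\ldots,T_{j-1}$ with coefficients polynomial in $\kappa_1,\ldots,\kappa_{j-1}$ and their arclength derivatives. Taking inner products then expresses each $C_j$, and hence each $a_{2k+1}$, as a polynomial in the curvatures and their derivatives at $x_0$. The structural claim driving the induction is that, assuming $\kappa_1,\ldots,\kappa_{k-1}$ are already known to be constant, one has
\[
a_{2k+1}=c_k\,(\kappa_1\kappa_2\cdots\kappa_{k-1})^2\,\kappa_k^2+P_k(\kappa_1,\ldots,\kappa_{k-1}),
\]
with $c_k\ne 0$ a universal combinatorial constant and $P_k$ a polynomial in the lower curvatures only (in particular containing no arclength derivatives of $\kappa_k$). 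Given this, the induction runs cleanly: the base case $k=1$ is the already-computed identity \eqref{findfr} combined with $C_3=\langle\gamma',\gamma''\rangle=0$ and the direct evaluation $C_4=-\kappa_1^2/12$, so $a_3$ is a nonzero multiple of $\kappa_1^2$ and constancy of $a_3$ forces constancy of $\kappa_1\ge 0$; for the inductive step, constancy of $\kappa_1,\ldots,\kappa_{k-1}$ makes $P_k$ constant, so constancy of $a_{2k+1}$ forces $\kappa_k^2$, hence $\kappa_k$, constant. (If some intermediate $\kappa_j$ vanishes then $\gamma$ locally lies in a lower-dimensional affine subspace and all higher $\kappa$'s are declared to be $0$ by convention, so the claim is trivial from that index on.)

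The main obstacle is the structural claim itself, i.e., verifying that no arclength derivatives $\kappa_k',\kappa_k'',\ldots$ appear in $a_{2k+1}$ and that the coefficient $c_k$ is nonzero. The key combinatorial observation is that along the ``ascending'' branch of the Frenet--Serret recursion the vector $T_{k+1}$ first appears in $\gamma^{(k+1)}$, with coefficient the pure product $\kappa_1\kappa_2\cdots\kappa_k$ obtained without ever differentiating a curvature; its contribution to $|A_{k+1}|^2\subset C_{2k+2}$ supplies the asserted term with an explicitly nonzero $c_k$. Any other appearance of $\kappa_k$ in $C_3,\ldots,C_{2k+2}$ is paired with a factor depending only on $\kappa_1,\ldots,\kappa_{k-1}$ and their derivatives (constant by inductive hypothesis), while derivatives $\kappa_k^{(m)}$ with $m\ge 1$ can enter only through $\gamma^{(j)}$ for $j\ge k+2$, contributing exclusively to $C_j$ with $j\ge 2k+3>2k+2$ and thus not to $a_{2k+1}$. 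Executing this Frenet bookkeeping carefully is routine but is the bulk of the technical work.
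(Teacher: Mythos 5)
Your overall architecture is the same as the paper's: use the symmetry $f_-(s)=f_+(-s)$ (you wrote $f_-(r)=-f_+(-r)$, a sign slip, but you draw the correct conclusion that $f=f_+-f_-$ is odd), reduce the hypothesis to constancy of the odd coefficients $a_{2k+1}$, and run an induction on $k$ based on a structural claim that the only appearance of $\kappa_k$ in $a_{2k+1}$ is a nonzero multiple of $(\kappa_1\cdots\kappa_{k-1})^2\kappa_k^2$. The structural claim itself is true, and it is exactly the Claim in Step 8 of the paper's proof. The problem is with your justification of it, at the one point where the whole argument could fail: the nonvanishing of $c_k$. You identify a single source for the monomial $(\kappa_1\cdots\kappa_k)^2$ in $C_{2k+2}$, namely $|A_{k+1}|^2$ via the $E_{k+1}$-component $\kappa_1\cdots\kappa_k$ of $\gamma^{(k+1)}$, and you assert that ``any other appearance of $\kappa_k$ is paired with a factor depending only on $\kappa_1,\dots,\kappa_{k-1}$.'' That assertion is false. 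For every $j=1,\dots,k$ the pairing $\langle A_{2k+2-j},A_j\rangle$ also produces the monomial $(\kappa_1\cdots\kappa_k)^2$: the Frenet expansion of $\gamma^{(2k+2-j)}$ contains, in its $E_j$-component, the term coming from the walk that ascends to $E_{k+1}$ and descends back to $E_j$, with coefficient $(-1)^{k+1-j}\kappa_1\cdots\kappa_k\cdot\kappa_j\cdots\kappa_k$, and this pairs with the $E_j$-component $\kappa_1\cdots\kappa_{j-1}$ of $\gamma^{(j)}$. These extra contributions carry alternating signs, so cancellation with the term you exhibit is a genuine possibility that must be excluded. One gets
\[
c_k\ \propto\ \sum_{j=1}^{2k+1}\frac{(-1)^{k+1-j}}{j!\,(2k+2-j)!}=\frac{2(-1)^k}{(2k+2)!}\neq 0,
\]
but this computation (or the paper's enumeration of all contributing path pairs in Step 8) is precisely the missing content; note for $k=1$ it is what produces $C_4=-\kappa_1^2/12$ rather than $1/4$.

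A secondary flaw: your reason for excluding arclength derivatives of $\kappa_k$ from $a_{2k+1}$ is a non sequitur. From ``$\kappa_k^{(m)}$ enters only through $\gamma^{(j)}$ with $j\ge k+2$'' you cannot conclude that it contributes only to $C_j$ with $j\ge 2k+3$: the derivative $\gamma^{(k+2)}$ does contribute to $C_{2k+2}$, for instance through $\langle A_{k+2},A_k\rangle$, and it does contain $\kappa_k'$. The correct argument is the orthogonality/pairing one used in the paper's Steps 5--6: a monomial containing $\kappa_k^{(m)}$ in the $E_j$-component of $\gamma^{(\alpha)}$ forces $\alpha\ge(2k+2-j)+m$, while a nonzero $E_j$-component of the partner $\gamma^{(\beta)}$ forces $\beta\ge j$, so $\alpha+\beta\ge 2k+2+m>2k+2$. (In the $\langle A_{k+2},A_k\rangle$ example, the $\kappa_k'$-term of $A_{k+2}$ sits in the $E_{k+1}$-direction, which $A_k$ does not see.) Both gaps are repairable, and repairing them reproduces the paper's path-on-a-graph bookkeeping, but as written the proposal does not establish the key non-cancellation on which the induction rests.
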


\begin{corollary}\label{mainthmpart1cor}
If $\gamma$ is a curve in $\mathbb R^d, d\ge 2$, and $\mathcal H^1\llcorner \gamma$ is a uniform measure, then $\gamma$ has constant curvatures $\kappa_1,\ldots,\kappa_{d-1}$. 
\end{corollary}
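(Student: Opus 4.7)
The plan is to derive Corollary \ref{mainthmpart1cor} as an immediate consequence of Proposition \ref{mainthmpart1}, which does all the analytical work. I only need to verify the two hypotheses of that proposition: the $C^{2d}$-regularity of $\gamma$, and the vanishing of the supremum of the rescaled difference $\mu(B(\gamma(p),r))-\mu(B(\gamma(q),r))$.

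First I would dispatch the limit hypothesis. By definition \eqref{defn}, uniform distribution of $\mu=\mathcal H^1\llcorner\gamma$ gives a single function $f$ with $\mu(B(x,r))=f(r)$ for every $x\in\mathrm{spt}(\mu)$ and every $r>0$. In particular, for any $p,q$ in the parametrization domain of $\gamma$ the difference $\mu(B(\gamma(p),r))-\mu(B(\gamma(q),r))=f(r)-f(r)$ vanishes identically in $r$, so the quantity inside the limit is zero pointwise and the supremum is trivially $0$ on every interval $[a,b]$.

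Next I would address the regularity requirement. This is where I invoke the analytic rigidity provided by Theorem \ref{kpthm}: a uniformly distributed measure is a constant multiple of Hausdorff measure on an analytic variety $V$. Since $\mu=\mathcal H^1\llcorner\gamma$ has dimension $1$, the variety $V$ coincides with $\gamma$ up to $\mathcal H^1$-negligible sets, and is a $1$-dimensional analytic variety. After reparametrization by arclength $\gamma$ is therefore real-analytic, hence in particular $C^{2d}$-regular, satisfying the hypothesis of Proposition \ref{mainthmpart1} for any $d$.

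With both hypotheses verified, applying Proposition \ref{mainthmpart1} to any compact subinterval $[a,b]$ of the arclength domain of $\gamma$ yields that the Frenet curvatures $\kappa_1,\dots,\kappa_{d-1}$ are constant on $[a,b]$. Covering the full parametrization domain by such intervals and noting that the constants on overlapping intervals must agree (by continuity of the curvatures off their zero set) gives the global conclusion. The only subtle point is the regularity extraction, which is why Theorem \ref{kpthm} plays an essential role; beyond that the corollary follows with no further computation.
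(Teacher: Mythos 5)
Your proposal is correct and follows essentially the same route the paper intends: the limit hypothesis of Proposition \ref{mainthmpart1} is trivially satisfied because uniformity makes $\mu(B(x,r))$ independent of $x\in\mathrm{spt}(\mu)$, and the $C^{2d}$-regularity is supplied by the Kirchheim--Preiss analyticity (which the paper records as Corollary \ref{cor2}, also noting there that uniformity rules out self-intersections, a point worth making explicit so that the arclength parametrization and Frenet frame are well defined). No substantive difference from the paper's argument.
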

This shows that $(1)\Rightarrow (2)$ in Theorem \ref{mainthm}. The implication $(3)\Rightarrow (1)$ is easy to obtain and is valid in general for measures of any metric space, as already mentioned in the introduction: Indeed, homogeneity implies that $\mu(B(x,r))=\mu(I(B(x,r)))$ for any isometry $I\in G$, that for any $x,y\in \mathrm{spt}(\mu)$ there exists $I\in G$ such that $I(x)=y$, and if $I$ is an isometry this implies that $I(B(x,r))=B(y,r)$. This implies that $\mu$ is uniformly distributed. The implication $(2)\Rightarrow (3)$ is a direct consequence of the following (more general) proposition:

\begin{proposition}\label{mainthmpart2}
 A finite $1$-dimensional measure $\mu$ is homogeneous if and only if it is a constant multiple of $\mathcal H^1\llcorner (G\cdot \gamma)$ where $G\cdot\gamma$ is the orbit of $\gamma$ under a discrete subgroup $G<\mathrm{Isom}(\mathbb R^d)$ and $\gamma$ is a curve all of whose curvatures are constant.
\end{proposition}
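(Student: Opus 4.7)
The plan is to prove the two implications of the biconditional separately.

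For $(\Leftarrow)$: Start with $\mu=c\mathcal H^1\llcorner(G\cdot\gamma)$ where $\gamma$ has constant curvatures. By Proposition \ref{propclassifcurv}, $\gamma$ is either a helix or a toric knot, and in either case there is a canonical one-parameter subgroup $H\subset\mathrm{Isom}(\mathbb R^d)$ preserving $\gamma$ as a set and acting transitively on its points: screw motions along the axis for helices, and products of planar rotations at commensurable angular speeds for toric knots. Given two points $R_1\gamma(t_1),R_2\gamma(t_2)\in G\cdot\gamma$, the isometry $R_2\circ h_{t_2-t_1}\circ R_1^{-1}$ (where $h_t\in H$ denotes the arclength flow along $\gamma$) sends the former to the latter. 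Since isometries preserve $\mathcal H^1$ and the subgroup $\langle G,H\rangle$ preserves the set $G\cdot\gamma$, this furnishes the transitive $\mu$-preserving action required for homogeneity.

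For $(\Rightarrow)$: Let $\Gamma<\mathrm{Isom}(\mathbb R^d)$ be the group of $\mu$-preserving isometries; it is closed (the pushforward action is continuous for the weak-$*$ topology), hence a Lie subgroup, and by hypothesis acts transitively on $\mathrm{spt}(\mu)$. Since homogeneity implies uniform distribution, I would fix a connected component $\gamma_0$ of $\mathrm{spt}(\mu)$ and observe that in a small tubular neighborhood of any of its points $\mu$ coincides with $c\mathcal H^1\llcorner\gamma_0$; the uniformity of $\mu$ at small radii therefore transfers to $\mathcal H^1\llcorner\gamma_0$, and Corollary \ref{mainthmpart1cor} yields that all curvatures of $\gamma_0$ are constant. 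Isometries permute connected components of $\mathrm{spt}(\mu)$, so $\Gamma$ acts transitively on the (countable) set of components. The stabilizer $\Gamma_0:=\mathrm{Stab}_{\gamma_0}(\Gamma)$ is closed and contains the identity component of $\Gamma$ (a connected group cannot move $\gamma_0$ discontinuously to another component), hence is open, and $\Gamma/\Gamma_0$ is discrete.

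To conclude, I would select one coset representative for each element of $\Gamma/\Gamma_0$ to obtain a discrete set $G\subset\Gamma$ with $G\cdot\gamma_0=\mathrm{spt}(\mu)$. The main obstacle is the statement's requirement that $G$ be a \emph{subgroup}: a set of coset representatives is always discrete but closes under composition only when the short exact sequence $1\to\Gamma_0\to\Gamma\to\Gamma/\Gamma_0\to 1$ admits a section. Natural remedies are to work inside the normalizer of $\Gamma_0$ in $\Gamma$ and exhibit a splitting there, or, more robustly, to replace ``subgroup'' by ``discrete system of representatives'', which is the viewpoint adopted in Theorem \ref{mainthm2}. Modulo this point, the four steps above — constructing the one-parameter group $H$, verifying transitivity of $\langle G,H\rangle$, extracting constant curvatures of $\gamma_0$ via Corollary \ref{mainthmpart1cor}, and identifying the discrete coset space $\Gamma/\Gamma_0$ — form the complete skeleton of the proof.
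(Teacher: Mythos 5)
Your skeleton matches the paper's: for $(\Leftarrow)$ the paper likewise builds a one-parameter group of isometries flowing along $\gamma$ (via uniqueness for the Frenet--Serret system \eqref{frenetframe} with constant coefficients, rather than via the explicit helix/toric-knot parametrization, but this is the same idea), and for $(\Rightarrow)$ it likewise splits the symmetry group into a discrete part and the identity component and reduces to a single connected orbit. Two differences are worth noting. First, in $(\Rightarrow)$ the paper begins by invoking Kirchheim--Preiss (Theorem \ref{kpthm}) to conclude that $\mathrm{spt}(\mu)$ is a $1$-dimensional analytic variety; you need this (or something like it) before you may speak of a ``tubular neighborhood'' of a component $\gamma_0$ or of its curvatures at all, and your write-up never supplies it. Second, once one knows $\gamma_0$ is an analytic curve on which a group of isometries acts transitively, the constancy of the curvatures is immediate because curvatures are isometry invariants; the paper uses exactly this one-line argument, whereas your detour through uniform distribution and Corollary \ref{mainthmpart1cor} (really the local Proposition \ref{mainthmpart1}) imports much heavier machinery than is needed, although it is not wrong. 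Your observation about the coset-representative versus subgroup issue is fair; the paper disposes of it by asserting a splitting $\tilde G=G\times G_0$ with only a sketchy justification, so you have identified a genuine soft spot rather than created one.

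The one step in your argument that actually fails as written is the transitivity claim in $(\Leftarrow)$: you assert that $\langle G,H\rangle$ preserves the set $G\cdot\gamma$, but $H$ stabilizes $\gamma$ setwise and has no reason to stabilize the other components $R\gamma$, $R\in G$. Concretely, if $\gamma$ is a helix with vertical axis and $G$ is generated by a horizontal translation, the screw motions in $H$ rotate the translated helices about the wrong axis and carry them off the support, so $R_2\circ h_{t_2-t_1}\circ R_1^{-1}$ does not preserve $G\cdot\gamma$ and homogeneity does not follow. Some compatibility between $G$ and the intrinsic symmetry group of $\gamma$ (e.g.\ that $G$ normalizes $H$, as happens in Proposition \ref{3d}) is needed to close this implication; the paper's own reduction ``without loss of generality $G=\mathrm{id}$, otherwise componentwise'' elides the same difficulty, but you should not present the unrestricted claim as proved.
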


The above result is well-known but we were not able to find a proof in the literature, thus we produce a proof below. Also, we will not discuss here the classification of possible discrete groups $G$ described above.

\medskip

In order to recall the definition of the $\kappa_j$, we first work in the absence of degeneracies of $\gamma$. We say that $\gamma$ has a \emph{degenerate point} at $x$ in case we have the following zero-Wronskian condition:
\[
\gamma'(x)\wedge\gamma''(x)\wedge\cdots\wedge\gamma^{(n)}(x)=0. 
\]
If $x$ is a nondegenerate point then we define the \emph{Frenet frame} $(E_1(x),\ldots, E_n(x))$ at $\gamma(x)$ inductively applying the Gram-Schmidt orthogonalization procedure to the basis $(\gamma'(x),\ldots,\gamma^{(n)}(x))$.

\medskip

If $x$ is a nondegenerate point of $\gamma$ then the curvatures $\kappa_j(x), 1\le j<n$ can be directly defined by
\begin{equation}\label{defkj}
 \kappa_j(x):=\langle E_j'(x), E_{j+1}(x)\rangle.
\end{equation}
In this case the following Frenet-Serret equations follow directly from \eqref{defkj} and hold in a neighborhood of $x$:
\begin{equation}\label{frenetframe}
\frac{d}{ds}\left(\begin{array}{c}E_1 \\ \vdots \\ \vdots \\ E_n \end{array}\right)=  \left(\begin{array}{cccc}0 & \kappa_1 &  & 0 \\-\kappa_1 & \ddots & \ddots  &  \\ &    \ddots & 0 & \kappa_{n-1} \\  0 &  &-\kappa_{n-1}&0 \end{array}\right)\left(\begin{array}{c}E_1 \\ \vdots \\ \vdots  \\ E_n \end{array}\right).
\end{equation}
Note that we can also define the curvature when the point $x$ is degenerate (e.g. for a straight line in $\R^3$) but we have to be more careful with the definition. In case there exists a frame $(E_1(x),\ldots, E_n(x))$ at $\gamma(x)$ for each $x$ in the domain of definition of $x$ and there exist curvatures $\kappa_1(x),\ldots,\kappa_{n-1}(x)$ such that \eqref{frenetframe} holds, then  call $\gamma$ is called a \emph{Frenet curve}. Nomizu \cite{nomizu} proved the following (his paper is set up in $\mathbb R^3$ for a finite length curve, but the proof generalizes directly to possibly infinite curves and to general dimension):
\begin{theorem}[Nomizu 1959]\label{nomizuthm}
 Assume that $\gamma$ is a $C^\infty$ curve such that at each $x$ there exists $m_x\in\mathbb N_+$ such that $\gamma^{(m_x)}(x)\neq 0$ (such curves are called by Nomizu \emph{normal curves}). Then $\gamma$ is a Frenet curve.
\end{theorem}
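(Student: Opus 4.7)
The plan is to construct the Frenet frame $(E_1,\ldots,E_n)$ and the curvatures $\kappa_1,\ldots,\kappa_{n-1}$ by induction on $j$, with the normality hypothesis entering precisely to rescue the construction across degenerate points. Since the theorem is applied in this paper to an arclength-parametrized curve, set $E_1:=\gamma'$, which is smooth and of unit length. Suppose inductively that smooth orthonormal fields $E_1,\ldots,E_j$ and smooth scalars $\kappa_1,\ldots,\kappa_{j-1}$ have been defined along $\gamma$ so that the first $j-1$ rows of \eqref{frenetframe} hold. Define the candidate normal
\[
 W_j(x):=E_j'(x)+\kappa_{j-1}(x)\,E_{j-1}(x),
\]
with the convention $\kappa_0\equiv 0$ so that $W_1=\gamma''$. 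A direct computation using the inductive Frenet relations for indices $i<j-1$ together with $\langle E_j,E_j\rangle\equiv 1$ shows that $W_j$ is smooth and pointwise orthogonal to each of $E_1,\ldots,E_j$. On the open set $U_j:=\{W_j\neq 0\}$ we therefore set $\kappa_j:=|W_j|$ and $E_{j+1}:=W_j/|W_j|$, smoothly realizing the $j$-th Frenet--Serret equation in the sense of \eqref{defkj}.

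The technical core of the induction is the extension of $E_{j+1}$ and $\kappa_j$ smoothly across the closed set $Z_j:=\{W_j=0\}$. The enabling lemma is the following: if a smooth $V\colon I\to\mathbb{R}^n$ vanishes at $x_0\in I$ only to some finite order $m$, then $V(x)=(x-x_0)^m\widetilde V(x)$ with $\widetilde V$ smooth and $\widetilde V(x_0)\neq 0$, so the direction $\widetilde V/|\widetilde V|$ provides a smooth extension of $V/|V|$ across $x_0$ (up to the sign $\mathrm{sgn}((x-x_0)^m)$). Applying this to $W_j$ at each finite-order zero $x_0$, one redefines $E_{j+1}:=\widetilde W_j/|\widetilde W_j|$ and $\kappa_j:=(x-x_0)^m|\widetilde W_j|$ in a neighborhood of $x_0$; both are smooth and the $j$-th row of \eqref{frenetframe} continues to hold with the possibly signed $\kappa_j$ permitted by the definition \eqref{defkj}.

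The main obstacle is therefore to rule out infinite-order vanishing of $W_j$ at any point, and this is the exact content of the normality hypothesis. Unwinding the inductive construction, $W_j$ admits a representation as a linear combination of $\gamma^{(1)},\ldots,\gamma^{(j+1)}$ with smooth coefficients, the coefficient of $\gamma^{(j+1)}$ being (up to a positive smooth factor) the reciprocal of the norm of the $j$-th Gram--Schmidt vector $\gamma^{(j)}-\sum_{i<j}\langle\gamma^{(j)},E_i\rangle E_i$, nonzero on the complement of the lower-order degenerate loci. If $W_j$ vanished to infinite order at some $x_0$, this relation together with the already-established Frenet equations for lower indices would force, by a Taylor-series bookkeeping argument, every derivative $\gamma^{(m)}(x_0)$ to vanish, contradicting the existence of some $m_{x_0}$ with $\gamma^{(m_{x_0})}(x_0)\neq 0$. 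Once this contradiction is in place the induction closes, producing the smooth Frenet frame and smooth curvatures of $\gamma$ on its entire domain. The hardest part of executing this plan rigorously is precisely the algebraic bookkeeping connecting $W_j$ to the derivatives of $\gamma$, which must be carried out carefully so that the propagation-of-vanishing argument genuinely yields the desired contradiction.
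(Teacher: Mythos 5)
Your inductive construction away from the zero set of $W_j$, and the device of factoring a finite-order zero out of $W_j$ so as to extend $E_{j+1}=\widetilde W_j/|\widetilde W_j|$ and a signed $\kappa_j$ across it, are correct and are exactly the mechanism behind the paper's sketch (isolated degenerate points, piecewise Gram--Schmidt, matching across the degeneracies by regularity). The gap is in the step you yourself identify as the crux: the claim that the normality hypothesis, as stated, forbids infinite-order vanishing of $W_j$. That implication is false. For an arclength-parametrized straight line in $\mathbb R^3$ one has $W_1=\gamma''\equiv 0$, vanishing to infinite order at every point, while $\gamma^{(1)}\equiv e_1\neq 0$, so the curve is normal and no contradiction arises. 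The reason your ``propagation of vanishing'' cannot work is structural: infinite-order vanishing of $W_j$ at $x_0$ only forces every $\gamma^{(m)}(x_0)$ to lie in $\mathrm{span}\bigl(E_1(x_0),\dots,E_j(x_0)\bigr)$ (the curve is flat to infinite order inside a $j$-dimensional subspace), which is perfectly compatible with the existence of some nonvanishing derivative. So Step 3 does not close the induction.

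Moreover the step is not merely unproved but unprovable from the hypothesis as transcribed: take $\gamma(t)=(t,\eta(-t),\eta(t))$ with $\eta(s)=e^{-1/s}$ for $s>0$ and $\eta\equiv 0$ for $s\le 0$, reparametrized by arclength. This is a regular $C^\infty$ curve (hence normal in the stated sense), $W_1=\gamma''$ has an isolated infinite-order zero at $t=0$, and the normalized normal has one-sided limits $e_2$ and $e_3$ there; since any Frenet frame must satisfy $\gamma''=\kappa_1E_2$, no continuous $E_2$ exists and the curve is not Frenet at all. Thus the finite-order vanishing of the $W_j$ (equivalently, the isolatedness of the degenerate points) must be read as part of Nomizu's hypothesis --- his normality is a condition on the order of vanishing of the curvature data, not merely on the derivatives of $\gamma$ --- rather than derived from it. Your Steps 1 and 2 do yield a complete proof once finite-order vanishing of each $W_j$ is granted, and in particular in the analytic case of Corollary \ref{cor1} (which is all the paper uses), where each $W_j$ is analytic and hence either has only finite-order zeros or vanishes identically; that last case, where one must complete the frame by parallel transport with $\kappa_j\equiv 0$, is also missing from your write-up.
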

The main nontrivial observation in the proof \cite{nomizu} of the above result is that the normality condition allows, for smooth curves, to deduce that degenerate points are isolated. After this, one defines the $E_j$ piecewise uniquely along $\gamma$ by Gram-Schmidt orthogonalization, and verifies that these definitions agree across the isolated degenerate points, due to the regularity of $\gamma$. We use the following:

\begin{corollary}[of Theorem \ref{nomizuthm}]\label{cor1}
 If $\gamma$ is an analytic curve then it is Frenet.
\end{corollary}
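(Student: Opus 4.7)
The plan is to verify that any analytic curve $\gamma$ satisfies the normality hypothesis of Theorem \ref{nomizuthm}, from which the Frenet property follows directly. Since real analyticity implies $C^\infty$ regularity, the smoothness condition is automatic, so the only point to check is that at each $x$ there exists some $m_x\in\mathbb N_+$ with $\gamma^{(m_x)}(x)\neq 0$.

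For this, I would argue by contradiction using the identity principle for real analytic functions: if $\gamma^{(m)}(x_0)=0$ for every $m\ge 1$ at some point $x_0$ in the (connected) domain of $\gamma$, then the Taylor series of $\gamma$ centered at $x_0$ reduces to the constant $\gamma(x_0)$, and by analyticity $\gamma$ is identically constant on its entire domain. This contradicts the assumption that $\gamma$ is a (nontrivial) curve; in particular, in the setting of this paper where $\gamma$ is parameterized by arclength we have $|\gamma'|\equiv 1$, so the normality condition holds already with $m_x=1$ at every point. Invoking Theorem \ref{nomizuthm} then yields that $\gamma$ is a Frenet curve. There is no genuine obstacle to overcome: the corollary is essentially an immediate consequence of the identity principle combined with Nomizu's theorem, and its role is simply to ensure that the Frenet frame and the curvatures $\kappa_j$ are globally well defined for the analytic curves encountered in the proof of Theorem \ref{mainthm}.
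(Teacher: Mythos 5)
Your proof is correct and matches the paper's intended (implicit) argument: the paper states this as an immediate corollary of Nomizu's theorem without writing out details, and the content is exactly that analyticity forces the normality hypothesis, since a point where all derivatives vanish would make $\gamma$ constant by the identity principle. Your observation that arclength parametrization gives $m_x=1$ directly is a clean shortcut in the setting where the corollary is actually used.
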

This is useful to us in conjunction with the Kirchheim-Preiss theorem \ref{kpthm}, which gives a sufficient condition for uniformly distributed measures already mentioned in the introduction.

\medskip

Note that an analytic variety can only self-intersect at an at most countable number of points. By comparing the asymptotics for $r\to 0$ of $\mathcal H^1\llcorner \gamma (B(r,x))$ when $x$ is a self-intersection point and when it is not, we conclude that $\gamma$ has no self-intersections, and thus we get the following:
\begin{corollary}[of Theorem \ref{kpthm}]\label{cor2}
 If $\gamma$ is a curve and $\mathcal H_1\llcorner \gamma$ is uniformly distributed then $\gamma$ is analytic and embedded.
\end{corollary}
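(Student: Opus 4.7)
The plan is to combine Theorem \ref{kpthm} with a local density comparison between regular and singular points of an analytic variety. First, I would apply the Marstrand-Mattila-Preiss theorem together with the leading expansion $f(r) = 2r + O(r^3)$ from \eqref{findfr} to identify $k = 1$ in Theorem \ref{kpthm}, which then yields that $\mathcal H^1 \llcorner \gamma = C\, \mathcal H^1 \llcorner V$ for a real-analytic variety $V \subset \mathbb R^d$. Since both sides are supported on closed sets, $\gamma$ coincides with $V$ as a set.

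Next I would invoke the classical structure of one-dimensional real-analytic varieties: $V$ decomposes locally into finitely many embedded analytic arcs, and its singular locus $\Sigma \subset V$ (the set where two or more such arcs meet) is at most countable and consists of isolated points. At a regular point $x \in V \setminus \Sigma$, the arc through $x$ can be parametrized by arc-length and \eqref{findfr} gives
\[
\mu(B(x,r)) = 2r + O(r^3).
\]
At a singular point $x_0 \in \Sigma$ where $k \ge 2$ analytic branches $\gamma_1, \dots, \gamma_k$ meet, each branch $\gamma_i$ is an embedded analytic arc and two distinct branches share only the point $x_0$ inside a sufficiently small neighborhood (since two analytic arcs through a common point either coincide locally or intersect at an isolated set of points). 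Therefore, by additivity of $\mathcal H^1$ on essentially disjoint sets,
\[
\mu(B(x_0, r)) = \sum_{i=1}^{k} \mathcal H^1(\gamma_i \cap B(x_0, r)) = 2kr + o(r).
\]
The uniform distribution hypothesis \eqref{defn} forces the leading coefficient of $f(r)$ to be independent of $x \in \mathrm{spt}(\mu)$; comparing $2$ with $2k \ge 4$ yields a contradiction unless $\Sigma = \emptyset$. Hence $\gamma = V$ is an embedded analytic curve.

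The main obstacle is the local branch-additivity computation at a singular point: one must verify that for $r$ small enough the branches $\gamma_i$ are pairwise disjoint inside $B(x_0, r) \setminus \{x_0\}$, and that each contributes asymptotically $2r$ to the measure. Both facts follow from the analyticity of the branches (yielding local disjointness via the identity theorem for real-analytic arcs, and arc-length parametrizability away from $x_0$), but the precise bookkeeping of the $o(r)$ error from each branch is the only point in the argument that requires care beyond the formalism already developed for \eqref{findfr}.
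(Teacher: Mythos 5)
Your proposal is correct and follows essentially the same route as the paper: invoke Theorem \ref{kpthm} to get analyticity of $V=\mathrm{spt}(\mu)$, note that a one-dimensional analytic variety can only self-intersect at isolated points, and rule out such points by comparing the leading density $2r$ at a regular point with $2kr$, $k\ge 2$, at a point where several analytic branches meet. The paper states this in two sentences preceding the corollary; your write-up merely fills in the branch-additivity bookkeeping that the paper leaves implicit.
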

Summing up all the preliminary results so far, we get:
\begin{corollary}[of corollaries \ref{cor1} and \ref{cor2}]\label{cor3}
 If $\gamma$ is a curve and $\mathcal H^1\llcorner \gamma$ is uniformly distributed, then an orthonormal frame satisfying \eqref{frenetframe} along $\gamma$ exists.
\end{corollary}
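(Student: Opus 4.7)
The plan is to obtain Corollary \ref{cor3} as a direct synthesis of Corollaries \ref{cor1} and \ref{cor2}, with only a small verification in between. By Corollary \ref{cor2}, the assumption that $\mathcal{H}^1\llcorner\gamma$ is uniformly distributed forces $\gamma$ to be analytic and embedded. Corollary \ref{cor1} then asserts that analytic curves are Frenet, which by definition means that an orthonormal frame satisfying \eqref{frenetframe} along $\gamma$ exists. This is exactly what we want.

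The only gap to fill is to check that the hypothesis of Nomizu's theorem \ref{nomizuthm}, namely normality, is in force; this is the step that justifies the transition from ``analytic'' to ``Frenet'' in Corollary \ref{cor1}. I would verify this as follows: parametrize $\gamma$ by arclength, which is legitimate because $\gamma$ is a smooth embedded curve. Then $|\gamma'(x)|\equiv 1$, so in particular $\gamma'(x)\neq 0$ at every $x$. Hence at every point $x$ in the domain of $\gamma$ one can take $m_x=1$ in Nomizu's condition $\gamma^{(m_x)}(x)\neq 0$, which shows $\gamma$ is a normal curve in Nomizu's sense. Applying Theorem \ref{nomizuthm} finishes the argument.

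There is essentially no obstacle here; the corollary is a bookkeeping step that stitches together the analyticity output of the Kirchheim--Preiss theorem with the existence result of Nomizu. The only place one might stumble is the degenerate-point issue that Nomizu's theorem is designed to handle: if one tried to build the frame by Gram--Schmidt on $(\gamma',\ldots,\gamma^{(n)})$ directly, one would have to worry about points where this system is linearly dependent. However, since we are invoking Theorem \ref{nomizuthm} as a black box rather than reproving it, this is not our concern here, and once normality is checked the existence of a global Frenet frame is immediate.
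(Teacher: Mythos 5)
Your proposal is correct and follows exactly the route the paper intends: Corollary \ref{cor2} gives analyticity (and embeddedness) from uniform distribution, and Corollary \ref{cor1} (via Nomizu's Theorem \ref{nomizuthm}) upgrades analyticity to the existence of a Frenet frame. Your added verification of Nomizu's normality hypothesis via the arclength parametrization ($\gamma'\neq 0$, so $m_x=1$) is a sensible filling-in of a detail the paper leaves implicit, and it changes nothing essential.
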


\subsection{Proofs of propositions \ref{mainthmpart1} and \ref{mainthmpart2}}\label{proofsprop}

\begin{proof}[Proof of Proposition \ref{mainthmpart1}:]\hfill\\

\textbf{Step 1:} \emph{Expressing the $c_k$ in terms of the $C_h$.}

\medskip

We consider the general form of the coefficients $c_i$ from \eqref{eqtaylorf}, which are determined by the uniform measure conditions \eqref{definf}. We saw above that $c_1^2=1$ is the condition on the coefficient of $r^2$, and this gives $c_1=\pm 1$. We prove now that the remaining coefficients of $f$ are determined uniquely in terms of $c_1$ and of the $C_j$, thus we define $f_\pm$ to be the solution determined by $c_1=\pm1$ respectively. The coefficient of $s^k$ for $k>2$ in the expression of $|\gamma(f(s))|^2$ for general $f$ can be computed explicitly, and it equals
\[
\sum_{2\le k'\le k} C_{k'}\left(\sum_{\substack{\vec \ell \in \{1,2,\ldots\}^{k'}\\\ell_1+\cdots+\ell_{k'}=k}} c_{\ell_1}\cdots c_{\ell_{k'}}\right).
\]
The highest index $\ell_\alpha$ which can appear in the above expression is $\ell_\alpha=k-1$ and it can appear only for $k'=2$, with a contribution of $2c_1c_{k-1}C_2=2c_1c_{k-1}$. The above sum also contains a single term that features a $C_k$-factor, and this term is $c_1^kC_k$. Thus for $k>2$, imposing that the coefficient of $s^k$ is zero and isolating the $c_{k-1}$-contribution in the ensuing equation gives:
\begin{equation}\label{eqck}
c_{k-1}=\frac{1}{2c_1}\left[P + c_1^k C_k\right], \quad P\in \mathbb N[C_2,C_3,\ldots,C_{k-1},\ c_1,c_2,\ldots,c_{k-2}].
\end{equation}
This allows to prove by induction on $k$ that for $k\ge 2$ the coefficient $c_k$ is uniquely determined as polynomial of $C_2,C_3,\ldots,C_{k+1}$ with coefficients depending on $c_1=\pm1$:
\begin{equation}\label{c_kpol}
 c_k=P_{\pm,k}(C_2,\ldots,C_{k}) +\frac{(\pm1)^{k-1}}{2} C_{k+1}.
\end{equation}
In particular, $f_\pm$ are uniquely determined (this proves in general that a formula generalizing \eqref{formula_fpm} for $f_\pm$ holds). 

\medskip

\textbf{Step 2:} \emph{The $c_{k-1}$ with $k$ even appear in the Taylor expansion of $f(r)$ without cancellations.}

\medskip

By symmetry under the transformation $s\mapsto -s$ we see that $f_-(s)=f_+(s)$. Thus in the calculation of $f_+-f_-$ the coefficients $c_{k-1}$ with $k$ even do not cancel, and thus stay constant as we vary the choice of origin along $\gamma$.

\medskip 

\textbf{Step 3:} \emph{Frenet frame coordinates and constancy of $\kappa_1$.}

\medskip

We know from Corollary \ref{cor3} that $\gamma$ has a Frenet frame $E_1(s),\ldots,E_n(s)$ along $\gamma$ as in \eqref{frenetframe}. In order to continue the proof, we will use this frame coordinates to express, first the $\gamma^{(k)}(0)$, and then we carefully discuss which contributions appear in the formula for $C_k$.

\medskip

We fix coordinates in $\mathbb R^n$ such that for $s=0$ there holds 
\begin{equation}\label{coord0}
 (E_1(0),\ldots,E_n(0))=(e_1,\ldots,e_n).
\end{equation}
Expansion \eqref{gamma} gives the following identifications of the first coefficients:
\begin{equation}\label{rewritegamma}
 \gamma'(0)=E_1(0)=A_1=e_1,\quad \gamma''(0)=2A_2=|\gamma''(0)|E_2(0)=2|A_2|e_2. 
\end{equation}
In particular we have $C_3=2\langle A_1,A_2\rangle=0$ and \eqref{defkj} gives also
\begin{equation}\label{kappa1}\kappa_1(0)=\langle E_1'(0), E_2(0)\rangle=4|A_2|^2.
\end{equation}
Differentiating twice the formula $|\gamma'(x)|^2=1$ we obtain also $\langle \gamma'(x),\gamma'''(x)\rangle = -|\gamma''(x)|^2$, thus $\langle \gamma'(0),\gamma'''(0)\rangle=-4|A_2|^2$, and we can compute
\[
 C_4=|A_2|^2+2\langle A_1,A_3\rangle =|A_2|^2 + 2\left\langle \gamma'(0),\frac{\gamma'''(0)}{6}\right\rangle =|A_2|^2-\frac{4}{3}|A_2|^2 \stackrel{\mbox{\eqref{kappa1}}}{=}-\frac{1}{12}\kappa_1(0).
\]
The constancy of the coefficients of $f(r)$ (computed up to $O(s^5)$ in \eqref{findfr}) as we vary the choice of origin along $\gamma$ imply that $\kappa_1$ is constant on $\gamma$.

\medskip

If $\kappa_1\equiv 0$ then $\gamma$ is a straight line, and the thesis is proved. So we further consider only the case $\kappa_1\equiv c\neq 0$. 

\medskip

\textbf{Step 4:} \emph{Induction step for the constancy of $\kappa_h, h\ge 2$ and strategy of proof.}

\medskip

We now prove by induction that the remaining $\kappa_h$, for $h< n$, are constant as well. The case $h=1$ is proved. For the inductive step, we may assume that $\kappa_j$ are constant and nonzero for $1\le j\le h$, and we prove that $\kappa_{h+1}$ is constant too. We note that, as above, if $\kappa_{h+1}\equiv 0$ then $\gamma$ is contained in an affine subspace of dimension $h+1$ and $\kappa_{h'}\equiv 0$ for all $h'>h$ as well, and the proof is complete. Similarly, if $h+1=n$ then the proof is complete. If neither of the last two mentioned conditions are satisfied, then we can continue the induction.

\medskip

The proof of the inductive step will proceed as follows: first we discuss in a general setup what terms can and cannot appear in the expression of $C_k$; next, we show that, under the condition that $\kappa_h$ and lower curvatures are constant, $C_{2h+3}=0$; next, we easily show, using the results of steps 1 and 2, that $C_{2(h+2)}$ needs to be constant; finally, from the constancy of $C_{2(h+2)}$ entails the constancy of $\kappa_{h+1}$ and allows to conclude the induction.

\medskip

\textbf{Step 5:} \emph{Encoding Frenet frame expansion of derivative via paths on a graph.}

\medskip

More generally, since $\alpha! A_\alpha=\gamma^{(\alpha)}(0)$ by Taylor expansion of $\gamma$ and comparison to \eqref{gamma}, we find that $C_k$ is a linear combination of terms of the form $\langle \gamma^{(\alpha)}(0), \gamma^{(\beta)}(0)\rangle$, with $\alpha+\beta=k$. We can then express $\gamma^{(\alpha)}(0)$ in the basis \eqref{coord0}, and use the equations \eqref{frenetframe} for bookkeeping which terms may and may not occur. We can imagine the following graph
\begin{equation}\label{graph}
 *\longrightarrow E_1\stackrel{\kappa_1}{\longrightarrow}E_2\stackrel{\kappa_2}{\longrightarrow}E_3\stackrel{\kappa_3}{\longrightarrow}\cdots\stackrel{\kappa_{n-2}}{\longrightarrow}E_{n-1}\stackrel{\kappa_{n-1}}{\longrightarrow}E_n,
\end{equation}
and for fixed $\alpha\geq 1$ we consider paths which
\begin{enumerate}
 \item start at ``$*$'' and at the first step move to the position labelled ``$E_1$'',
 \item at successive steps move between neighbors only, by exactly one position, and do not visit ``$*$'' ever again,
 \item stop after a number $\alpha'\le \alpha$ of steps.
\end{enumerate}
Now suppose we have such a path $P$, whose final position is ``$E_j$''. We now assign a monomial in the $\kappa_h$'s and their derivatives to $P$ as follows:
\begin{enumerate}
\item if $\alpha'=1$ then $P=1$,
 \item each time $P$ follows an arrow labelled $\kappa_h$ from the left to the right, we multiply by $\kappa_h$ our monomial,
 \item each time $P$ follows an arrow labelled $\kappa_h$ from the right to the left, we multiply by $-\kappa_h$ our monomial,
 \item if $P$ had length $\alpha'<\alpha$ then we choose (with repetitions allowed) $\alpha-\alpha'$ amongst the $\alpha'-1$ factors $\kappa_h$ in our constructed monomial (again considered including repetitions), and put a derivative on each of them.   
\end{enumerate}
To make this clearer, here is an example. Suppose $\alpha=11, \alpha'=7$. Then a possible $P$ is the path given by following first $*\longrightarrow E_1$ and then arrows $\kappa_1\kappa_2\kappa_3(-\kappa_3)(-\kappa_2)\kappa_2$, and terminating at position $E_3$. Then we must distribute $\alpha-\alpha'=4$ derivatives in total amongst these $6$ monomials in one of many possible ways, ending up, for example, with the resulting monomial $\kappa_1\kappa_2^{(3)}\kappa_2'\kappa_2\kappa_3^2$.

\medskip

One can verify by induction on $\alpha$ that with respect to the basis $E_1(x),\ldots,E_n(x)$, the $E_j(x)$-component of $\gamma^{(\alpha)}(x)$ has coefficient given by the sum of all possible monomials constructed as above associated to paths ending up at ``$E_j$''. To see this, note that we need to distribute the $\alpha$ derivatives by Leibnitz rule. When we apply a first derivative to $\gamma$ we get $E_1$, and each time one further derivative falls on the $E_k$-component of our expression, we can then apply \eqref{frenetframe} to replace that term one amongst the ones $E_{k\pm1}$ appearing in \eqref{graph} to the immediate left/right, adding the corresponding factor of $\pm \kappa_{k\pm1}$ to our expansion. The further details of the verifications are left to the reader.

\medskip

The orthogonality of the $E_j$'s and the fact that $C_k$ only features terms coming from scalar products $\langle \gamma^{(\alpha)}(0),\gamma^{(\beta)}(0)\rangle$ with $\alpha+\beta=k$, means that the paths $P$ associated to the $\gamma^{(\alpha)}$ term and $Q$ associated to the $\gamma^{(\beta)}$ term need to terminate on the same $E_j$, in order to have a chance of participating to $C_k$'s expansion. 

\medskip
 
\textbf{Step 6:} \emph{The coefficients $C_{2h+3}$ in \eqref{coeffc} vanish.}

\medskip

If we now consider the scalar products $\langle \gamma^{(\alpha)}(0),\gamma^{(\beta)}(0)\rangle$ which participate to $C_{2h+3}$, thus $\alpha+\beta=2h+3$. We suppose $\alpha\ge\beta$. By the final paragraph of Step 5, in $C_{2h+3}$ is a sum of contributions from paths $P_\alpha, P_\beta$ with associated monomials in the $\kappa_j$'s of degrees $\alpha'\le\alpha-1$ and $\beta'\le\beta-1$ respectively, arriving to some common endpoint $E_j$. Thus if we immagine to concatenate $P_\alpha$ and $P_\beta$ we obtain a loop inside the graph \eqref{graph}. If $h'$ is the highest index of $\kappa_{h'}$ appearing in such loop, we find
\[
 2h+1\ge\alpha'+\beta'\ge 2h'\quad \Rightarrow\quad h'\le h,
\]
which implies that for $h'\ge h+1$, no derivatives of $\kappa_{h'}$ can appear in either monomial. As for $h'\le h$ we have $\kappa_{h'}\equiv const$ by inductive assumption, it follows that no derivatives of any $\kappa_{h'}$ appear at all in the associated monomials, and we have $\alpha'=\alpha-1, \beta'=\beta-1$, thus the loop formed by concatenating $P_\alpha,P_\beta$ must have length $\alpha+\beta=2h+3$. However this cannoh be, as a loop has even length. This shows that all contributions to $C_{2h+3}$ vanish, so that $C_{2h+3}=0$, as claimed.

\medskip

\textbf{Step 7:} \emph{The coefficients $C_{2(h+2)}$ from \eqref{coeffc} are independent of the choice of origin along $\gamma$.}

\medskip 

We know that $C_2=1$ as a consequence of our normalization conditions and during the induction over $h$ we prove that $C_{2(h'+2)}$ is independent of the choice of origin for $h'<h$ and that $C_{2h'+3}=0$ for $h'\le h$ by Step 6. We also know that coefficient $c_{2h+3}$ appears in the Taylor expansion of $f(r)$ without canceling, due to Step 2, and to the fact that $2h+3$ is odd. In formula \eqref{c_kpol} with $k=2h+3$, we thus have that all the coefficients in the left hand side except the one in $C_{2(h+2)}$ are independent of the choice of origin along $\gamma$, and we have that $c_{2h+3}$ appears in the expression for $f(r)$ and thus is independent of the choice of origin as well. This shows that $C_{2(h+2)}$ is independent of the choice of origin as well, as claimed.

\medskip

\textbf{Step 8:} \emph{The curvature $\kappa_{h+1}$ is independent of the choice of origin along $\gamma$.}

\medskip

To see this, knowing that we know the statement to be true for $\kappa_{h'}, h'\le h$, and in view of Step 7, we know that $C_{2(h+2)}$ is also independent of the choice of origin, it will suffice to know that fixing the values of $\kappa_{h'},h'\le h$ and of $C_{2(h+2)}$ determines $\kappa_{h+1}$ completely. This will follow once we prove the following:

\medskip

\textbf{Claim:} \emph{We can express $C_{2(h+2)}$ as a polynomial in the $\kappa_{h'}, h'\le h+1$, and the only monomial featuring $\kappa_{h+1}$ in this polynomial is a positive multiple of $(\kappa_1\kappa_2\cdots\kappa_{h+1})^2$.}

\medskip

Indeed, once we prove the claim, knowing that $\kappa_{h'}, h'\le h$ are all with isolated zero, we get that $\kappa_{h+1}^2$ is uniquely determined in terms of $\kappa_{h'}, h'\le h$ and $C_{2(h+2)}$. Thus $\kappa_{h+1}$ is determined and constant as we move the origin of our coordinates along $\gamma$, up to sign. If $\kappa_{h+1}^2$ is zero then we are done, and if it is nonzero then, by regularity of $\gamma$, the value $\kappa_{h+1}$ cannot jump as we move the origin and thus the sign choice needs to be constant as well, completing the proof of the current step.

\medskip

Now we prove the above Claim. To do this we completely classify the contributions of each term $\langle \gamma^{(\alpha)}(0), \gamma^{(\beta)}(0)\rangle$ contributing to $C_{2(h+2)}$. Like we did in Step 7, we consider paths $P_\alpha,P_\beta$ contributing to the above two terms, assuming that $P_\alpha,P_\beta$ end up in a given $E_j$. Concatenating these paths gives a loop of length at most $2(h+2)$ and since we consider only terms containing $\kappa_{h+1}$-factors this loop must contain vertices ``$*$'' and ``$E_{h+2}$''. Therefore the loop is uniquely determined: it must be the only loop without backtracking, which goes from ``$*$'' to ``$E_{h+2}$'' and back. By following the assignment of signs in Step 5, we find that if paths $P_\alpha,P_\beta$ meet at $E_j$ and $\alpha\ge\beta$, then $P_\alpha$ is contributing the factors $\kappa_1\kappa_2\cdots\kappa_{h+1}(-\kappa_{h+1})(-\kappa_h)\cdots(-\kappa_j)$  and $P_\beta$ is contributing the factors $\kappa_1\kappa_2\cdots\kappa_{j-1}$. Therefore we get indeed the monomial $(\kappa_1\kappa_2\cdots\kappa_{h+1})^2$, with a sign of $(-1)^{h-j}=(-1)^{\alpha'-\beta'}=(-1)^{\alpha'+\beta'}=(-1)^{2h+2}=+1$. We thus show that all the coefficients appear with positive sign, completing the proof of the claim.

\medskip

As the induction step is complete, we have completed the proof of the proposition \ref{mainthmpart1}.
\end{proof} 

\begin{proof}[Proof of Proposition \ref{mainthmpart2}:]
Assume that $\mu$ is one-dimensional and homogeneous; in particular it is uniformly distributed, so that by the result of Kirchheim and Preiss, see Theorem \ref{kpthm}, it is $c\mathcal H^1$ restricted to an analytic subvariety $V$ of dimension $1$. We can assume that $c=1$ and that  $0\in V$ without loss of generality. By homogeneity, $V$ is the orbit of a subgroup $\tilde G\subset \mathrm{Isom}(\mathbb R^n)$ and $\tilde G$ itself has Hausdorff dimension $1$. As $V$ is analytic, we have that $\tilde G$'s connected components are at positive bounded from below Hausdorff distance from each other (the elementary proof of this is similar to the one of Lemma \ref{nonaccum} below, and we leave it to the reader), thus $\tilde G=G\times G_0$, where $G_0$ is the connected component of the identity in $\tilde G$ and $G$ is a discrete subgroup. Therefore we may reduce to the case where $G$ is trivial, the orbit of $\tilde G=G_0$ is a curve and that , and that $V=\gamma$ is an analytic curve. We find that by homogeneity the curvatures along $\gamma$ must be constant, concluding the proof of one implication. 

\medskip

Viceversa, assume that $\mu = c\mathcal H^1\llcorner G\cdot\gamma$ and $\gamma$ has constant curvatures. Without loss of generality $G=id$, otherwise we consider $\mu$ componentwise. Up to diminishing the dimension $n$ we may assume that all the curvatures of $\gamma$ are nonzero, and thus the Frenet frame $(E_1,\ldots,E_n)$ along $\gamma$ is nondegenerate. This means that there exists a unique curve of rotations $x\mapsto R(x)\in O(n)$ which sends $(e_1,\ldots,e_n)$ to $(E_1,\ldots,E_n)$ calculated at point $\gamma(x)$, and up to a change of basis we may suppose that $R(0)=id$. We can obtain the group property $R(x+y)=R(x)R(y)$ by uniqueness of the solution to the Frenet frame equation \eqref{frenetframe} with constant curvatures. Thus $R(x)$ is a one-parameter subgroup of $\mathrm{Isom}(\mathbb R^n)$.
\end{proof}

Let us recall from the classification stated in the introduction that curves which have constant curvatures all of which are different than zero have no axis if their affine span has even dimension, they have an axis if the affine span has odd dimension.

\section{Proof of Theorem \ref{mainthm2}}
We start with the following result. Recall that for two closed sets $A,B\subset X$ in a metric space (below $X=\mathbb R^d$ with the Euclidean distance), their distance is defined as
\[
 \mathrm{dist}(A,B):= \inf\{d(x,y):\ x\in A,y\in B\}.
\]
We start with the following consequence of the Kirchheim-Preiss classification from Theorem \ref{kpthm}, and of our Proposition \ref{mainthmpart1}:
\begin{lemma}\label{nonaccum}
 Assume tha $\mu$ is a uniformly distributed measure of dimension $1$. Then the connected components of $\mathrm{spt}(\mu)$ are congruent helices which lie at positive bounded from below distance from each other.
\end{lemma}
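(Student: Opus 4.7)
The plan is to combine the Kirchheim--Preiss analyticity theorem with a component-wise application of Proposition \ref{mainthmpart1}, and then to exploit the fact that the distribution function $f(r)$ is a global invariant visible from every point of the support.

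First, Theorem \ref{kpthm} gives $\mu = c\,\mathcal{H}^1 \llcorner V$ for an analytic $1$-dimensional subvariety $V \subset \mathbb{R}^d$; let $\{\gamma_i\}_{i \in I}$ be its connected components. For each $x \in \gamma_i$, analyticity of $V$ implies that $V \cap B(x,\rho)$ is a finite union of analytic arcs for $\rho$ small enough, each of which must either pass through $x$ or stay at positive distance from $x$ (accumulation at $x$ by a branch not containing it would contradict analyticity of the defining functions). The branches through $x$ lie in $\gamma_i$ by connectedness, so after shrinking $\rho$ further one has $V \cap B(x,\rho) \subset \gamma_i$. For $r < \rho$ this yields $\mu(B(x,r)) = c\,\mathcal{H}^1(\gamma_i \cap B(x,r)) = f(r)$, which trivially satisfies the hypothesis of Proposition \ref{mainthmpart1} on any compact subarc of $\gamma_i$. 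Hence each $\gamma_i$ has constant curvatures, and by Proposition \ref{propclassifcurv} it is a toric knot or a generalised helix.

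To show that the $\gamma_i$ are pairwise congruent, I invoke Proposition \ref{mainthmpart2}: the measure $\mathcal{H}^1 \llcorner \gamma_i$ is itself uniformly distributed, with some function $g_i(r)$. The identity $c\,g_i(r) = f(r)$ holds for all sufficiently small $r$, so $g_i$ is in fact the same function $g$ for every $i$. The small-$r$ Taylor expansion of $g_i$ encodes the curvatures $\kappa_1^{(i)},\ldots,\kappa_{d-1}^{(i)}$ via the explicit relation developed in Steps 6--8 of the proof of Proposition \ref{mainthmpart1}, so all $\gamma_i$ share the same curvature data and are congruent by the Frenet-frame rigidity of constant-curvature curves.

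Finally, for the lower bound on inter-component distances, fix $y \in \gamma_i$ and decompose
\[
f(r) = \mu(B(y,r)) = c\,g(r) + c \sum_{j \neq i} \mathcal{H}^1\bigl(\gamma_j \cap B(y,r)\bigr).
\]
Consequently the remainder $h(r) := \sum_{j \neq i}\mathcal{H}^1(\gamma_j \cap B(y,r)) = (f(r) - c\,g(r))/c$ is a function of $r$ alone, independent of $y$ and of $i$. Setting $\delta := \inf\{r > 0 : h(r) > 0\}$ therefore gives $\delta = \inf_{j \neq i}\mathrm{dist}(y,\gamma_j)$ simultaneously for every $y$ in every component, and the analyticity argument of the first paragraph forces $\delta > 0$. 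Hence $\mathrm{dist}(\gamma_i,\gamma_j) \geq \delta$ for all $i \neq j$. I expect the subtlest step to be this last observation: recognising that the ``residual'' distribution $h(r)$ must be a pointwise invariant, from which the minimal distance from any fixed point to the other components is forced to be a single universal positive constant.
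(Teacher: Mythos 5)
Your proposal is correct and follows essentially the same route as the paper: Kirchheim--Preiss analyticity, a componentwise application of Proposition \ref{mainthmpart1}, congruence of the components because the curvatures are recursively determined by the Taylor coefficients of the single function $f$ at $r=0$, and positivity of the inter-component distance from local analyticity combined with the point-independence of the residual mass. Your explicit formulation of that last step via $h(r)=(f(r)-c\,g(r))/c$ is a useful unpacking of what the paper compresses into one sentence, but it is the same argument.
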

\begin{proof}
By Theorem \ref{kpthm} we may suppose $\mu =\mathcal H^1\llcorner V$ where $V$ is an analytic variety of dimension $1$. Since $V$ is countably $1$-rectifiable, locally near regular points it is an analytic curve, and by applying Proposition \ref{mainthmpart1} we find that each component has constant curvatures, and the values of the constants involved are the same for each component. As the curve is uniquely determined by its curvatures, and since connected components of $V$ do not accumulate at every point of $V$ (because $\mu$ is locally finite), we have that connected components of $V$ stay at positive distance from each other.
\end{proof}

We next show that in fact the connected components of $\mathrm{spt}(\mu)$ are translations of each other:

\begin{proof}[Proof of Theorem \ref{mainthm2}:]
From Lemma \ref{nonaccum} it follows that $V=\mathrm{spt}(\mu)$ has at most countably many connected components, which we denote by $\gamma_1,\ldots, \gamma_n,\ldots$. We now call $C:\gamma_1\to \bigcup_{j\ge 2}\gamma_j$ the multivalued map which associates to each $x\in\gamma_1$ the indices of other connected components at which the distance to the complement of $\gamma_1$ in $V$ is achieved:
\[
 C(x):=\left\{j\ge 2:\ \mathrm{dist}(x,V\setminus \gamma_1)=\mathrm{dist}(x,\gamma_j)\right\}.
\]
We denote the distance between $\gamma_1$ and its ``first neighbors'' amongst the connected components by $d_1$:
\[
 d_1:=\mathrm{dist}(\gamma_1,V\setminus\gamma_1).
\]
Note that $C(x)$ must have constant cardinality, as follows by noting that $\lim_{r\downarrow d_1}\mu(B(x,r))$ is independent of $x\in\gamma_1$. Then, assuming that this constant cardinality is at least $1$, it follows that the subsets of $\gamma_1$ given by 
\[
 S_j:=\{x\in\gamma_1:\ j\in C(x)\}
\]
form an at most countable cover of $\gamma_1$, thus at least one of the $S_j$ has accumulation points. Using the fact that the curves $\gamma_1,\gamma_j$ are analytic, we find that whenever $S_j$ has accumulation points then $S_j=\gamma_1$. We suppose for the sake of concreteness that this happens for $j=2$, i.e. that $S_2=\gamma_1$. 

\medskip

We claim that if $\gamma_j$ are helices then the axes of $\gamma_1$ and $\gamma_2$ are parallel. Indeed, were they not, it would imply that for $x\to\pm\infty$ we have $\mathrm{dist}(\gamma_2(x),\gamma_1)\to \infty$, contradicting the fact that $S_2=\gamma_1$. Let $d_a\ge 0$ be the distance between the axes of $\gamma_1$ and $\gamma_2$ if they are helices, and the distance between their centers if they are torus knots.

\medskip

If for some $j$ the set $S_j$ is composed of isolated points, then we have a contradiction, again by using the property that $\lim_{r\downarrow d_1}\mu(B(x,r))$ is independent of $x\in\gamma_1$, as above. Thus all the first neighbors $\gamma_j$ are translates of $\gamma_1$ by vectors orthogonal to its affine span or rotated copies within its associated torus. 

\medskip

We can next repeat the reasoning replacing the role of $\gamma_1$ by the subset of $\gamma_1$ and its first neighbors instead of $\gamma_1$, we find that also the next distance $d_2$ realized between $\gamma_1$ and the ``second neighbors'' is constant along each one of these second neighbors, and if they are helices, their axes are parallel too. The same reasoning can be iterated showing these properties for all the $\gamma_j$. This concludes the proof of the theorem.
\end{proof}

\begin{proof}[Proof of Proposition \ref{3d}:]
Assuming that we are not in the first case of the statement corresponds to saying that all the curvatures of a connected component $\gamma$ are nonzero, i.e. that connected components are helices, and as a consequence of Theorem \ref{mainthmpart2} we know that their axes are parallel. 

\medskip

We show first that the distance $d_a$ between axes of first-neighbor connected components $\gamma_1,\gamma_2$ cannot be positive. If by contradiction $d_a>0$, then the axis of $\gamma_2$ is a translation by $v\neq 0$ of the axis of $\gamma_2$, with $v$ perpendicular to both axes. Then let $x_-,x_+$ be points in $\gamma_1$ with lowest (resp. highest) $v$-coordinate. We see that $\mathrm{dist}(x_-,\gamma_2)\ge d_a$ and $\mathrm{dist}(x_+,\gamma_2)\ge d_a$ by comparing with the projection orthogonal to the axes. Thus we have that the distance between a point in $\gamma_1$ and the curve $\gamma_2$, which is constant by Theorem \ref{mainthmpart2}, must be equal to $d_a$, and $\gamma_2=v+\gamma_1$. At the same time, $v$ must be orthogonal to both curves at each point, which is a contradiction to the fact that their affine span is $\mathbb R^d$. Thus $d_a=0$ and $\gamma_1,\gamma_2$ have the same axis. The same reasoning can be applied to each ``next-neighbor'' connected component to show that all connected components of the support of $\mu$ are helices with the same axis.

\medskip

We next use the notation and coordinates from Proposition \ref{propclassifcurv} for the helix connected components of $\mathrm{spt}(\mu)$. As the connected components also have the same periods $2\pi b\mathbb Z$, we find that by quotienting by the group of periods we obtain a uniform measure on $\{x\in\mathbb R^2:\ |x|=r_1\}\times \mathbb Z/2\pi b\mathbb Z$, whose connected components are circles at constant distance, thus are parallel circles. Quotienting by the orthogonal of these circles we find a uniform measure on a circle, which by Theorem \ref{kpthm} is given by two translated (possibly coincident) copies of a regular $n$-gon. This concludes the proof.
\end{proof}

\bibliographystyle{plain}
\bibliography{measure}

\end{document}